\documentclass[journal,twoside,web]{ieeecolor}

\usepackage{generic}
\usepackage{cite}
\usepackage{amsmath,amssymb,amsfonts}
\usepackage{algorithmic}
\usepackage{graphicx}
\usepackage{algorithm,algorithmic}
\usepackage{hyperref}
\hypersetup{hidelinks=true}
\usepackage{textcomp}
\usepackage{enumerate}
\newtheorem{lemma}{Lemma}
\newtheorem{remark}{Remark}
\newtheorem{corollary}{Corollary}

\usepackage{listings}
\usepackage{booktabs}
\newcommand{\vct}[1]{\mathbf{#1}}
\newcommand{\norm}[1]{\left\lVert#1\right\rVert}

\usepackage{orcidlink}

\def\BibTeX{{\rm B\kern-.05em{\sc i\kern-.025em b}\kern-.08em
    T\kern-.1667em\lower.7ex\hbox{E}\kern-.125emX}}
\begin{document}
\title{Static Output Feedback Control for Multi-Agent Systems Using Nash Equilibrium}
\author{Paul Serna-Torre, \IEEEmembership{Member, IEEE} \orcidlink{0000-0003-0578-7933}, and Patricia Hidalgo-Gonzalez, \IEEEmembership{Member, IEEE} \orcidlink{0000-0003-3635-8285}
\thanks{
Corresponding authors:
Paul Serna-Torre, Patricia Hidalgo-Gonzalez. Emails: 
        {psernatorre@ucsd.edu, phidalgogonzalez@ucsd.edu}}
\thanks{Paul Serna-Torre and Patricia Hidalgo-Gonzalez are with the Department of Mechanical and Aerospace Engineering and with the Center for Energy Research, University of California, San Diego, CA, 92093, USA.}
\thanks{This research is supported by the Center for Energy Research at the University of California San Diego, and the Ministry of Education of Peru.}
\thanks{Paul Serna-Torre is also supported by a scholarship from the Ministry of Education of Peru.}}

\maketitle

\begin{abstract}
This work proposes a semidefinite programming (SDP) model that synthesizes static output control feedback for networked linear time-invariant multi-agent systems (MAS). We theoretically prove that the output feedback control of MAS can be constructed using the Nash equilibrium previously derived for state-feedback control. The Nash equilibrium considers that agents seek to minimize their individual cost functions subject to the system dynamics of the MAS. The resulting control from the proposed SDP model ensures the stability of the closed-loop system and dissipativity when a disturbance is present in the MAS. A set of four case studies is implemented. The first three numerical cases serve to validate and compare cost effectiveness and time-domain performance of the proposed control framework with other state-of-the-art frameworks. The fourth case designs a frequency regulation scheme for a modified IEEE 24-bus power grid.  Using the proposed framework, distributed controllers are synthesized for the inverters which are considered agents in the grid. Each inverter uses partial state information of the neighboring inverters. The resulting control scheme is implemented in a high-fidelity non-linear model of 261 states that describes the electromagnetic transient dynamics of the power grid. Hence, compared to prior studies that usually validate their frameworks in small-scale numerical models, this fourth case demonstrates a virtue of our proposed SDP: its scalability and practical utility in real-world networked MAS. We implement the proposed framework in an open-access toolbox in Python and MATLAB, enabling practitioners and researchers to readily implement it in their applications. 



\end{abstract}

\begin{IEEEkeywords}
Differential game theory, linear systems theory, Nash equilibrium, output feedback, power systems.
\end{IEEEkeywords}

\section{Introduction}
\label{sec_introduction}

A wide range of real-world dynamical systems, such as power grids and robot systems, can be studied as multi-agent systems (MAS) \cite{interconnect_dyn_gianluca}. A MAS is composed by agents that can be connected through couplings or links. The agents are decision makers that perform a global task while seeking to optimize their individual performance metrics. Non-cooperative linear quadratic differential games (NLQGs), since its inception in the 1950s, has been increasingly developed to provide a framework for the control of MAS \cite{review_games_grid, tutorial_games_motion}. 


In NLQGs, each agent pursues to minimize its individual quadratic cost function by choosing its own strategy subject to a linear time-invariant (LTI) system state-space model \cite{Engwerda_LQbook}. Hence, each agent is considered self-interested as it does not cooperate in minimizing neither other agents' cost functions nor a global cost function. Despite this non-cooperative aspect, the strategies of the agents can be designed to ultimately accomplish a global task for the MAS. For instance, the work in \cite{noncooperative_paulpatricia} implements a control scheme using NLQGs to control the dynamics of multiple power inverters in a microgrid. Although the power inverters are considered selfish, the strategies for the inverters are designed to satisfy a regulation service and the microgrid's load. Moreover, in NLQGs, the system state-space representation does not only model the dynamics of the agents, but also it can incorporate the dynamics of the couplings between the agents. Hence, NLQGs can be considered as a multi-objective optimization problem subject to the dynamics of agents and their couplings. Given this complexity in NLQGs, the challenge is how to determine the strategies for the agents. 

Several works have investigated the determination of these strategies using the concept of Nash equilibrium. As the studies in \cite{lineargames_engwerda, thulasi_constructive_nash} show, these strategies can be determined in a static linear state-feedback form by solving coupled algebraic Riccati equations (CAREs). However, as CAREs are bilinear, solving them is not trivial. The works in \cite{capello_nash_approx} and \cite{Engwerda2007} propose relaxed versions of the CAREs and iterative algorithms, respectively, to solve them. Inspired by potential differential games \cite{potgames_fonseca_morales}, the work in \cite{scarpa_pote_games} reformulates the NLQG as a linear quadratic regulator (LQR) problem under certain assumptions, which is  easier to solve. An underlying assumption across the works in \cite{lineargames_engwerda, thulasi_constructive_nash, capello_nash_approx, Engwerda2007, scarpa_pote_games} is that the resulting Nash strategies use complete state information of the agents and couplings. However, this assumption makes the implementation of full state-feedback strategies impractical in several real-world networked systems. Agents can be placed in distant locations, leading to
considerable time delays in the communication flow. Thus, opting for distributed strategies is suggested over full state-feedback approaches \cite{networkedsystems_com_review}. In other cases, the access to the state information of the MAS requires extensive communication efforts. Aside from these communication infrastructure issues, privacy concerns may refrain agents from sharing their complete state information. 

These challenges to implement Nash state-feedback strategies can be exemplified as follows. In \cite{paulpatricia_nash_freq}, 
Nash state-feedback strategies are designed for the regulation of electrical frequencies in a 9-bus power grid. However, its implementation in a real-life large-scale grid can result impractical. This is because measuring frequencies and voltage angles, which both are the states in the grid, would require to install Phasor Measurement Units (PMU) at every substation in the grid. However, given the high cost of PMUs, installing numerous PMUs is financially nonviable. Hence, as Nash state-feedback strategies can be difficult or impractical to implement, it opens the question on how to synthesize strategies in NLQGs that use partial state information.  
In efforts to fill this gap, the work in \cite{lin_pursuitgame, wang_distributed_pursuer} determines Nash strategies for a pursuit-evasion game with limited state observation among agents. In a more general setting, the authors in \cite{thulasi_distributedgames, onlythulasi_game_distributed} propose a communication graph and bilinear matrix equalities to synthesize Nash strategies that use neighboring state information. The studies in \cite{capello_distributednash, capello_distributednash_2} demonstrate that when the communication graph in \cite{thulasi_distributedgames} is arborescence or directed acyclic, the CAREs reduce to a set of standard algebraic Riccati equations (AREs), which synthesizes distributed strategies for the agents. Although \cite{thulasi_distributedgames} and \cite{onlythulasi_game_distributed} develop careful theoretical results, they do not provide a method to explicitly solve its proposed bilinear matrix equations for an arbitrary number of agents. Moreover, the mentioned frameworks in \cite{lin_pursuitgame, wang_distributed_pursuer, onlythulasi_game_distributed, capello_distributednash, capello_distributednash_2} consider that the dynamics of the agents are uncoupled. This implies that the state matrix of the MAS state-space model is a block diagonal matrix that stacks the state matrices of the agents' state-space models. However, this assumption does not hold for several real-world systems, where a physical network inevitably connects agents' states through physical laws. For instance, in a gas network \cite{gas_paper}, the gas flowing between nodes is a function of the nodal pressures, which are states that can be associated to gas producers (agents) at the nodes. 


Furthermore, the article in \cite{engwerda_outputnash} derives sufficient and necessary conditions for the existence of Nash strategies with static output feedback structure in NLQGs. However, fulfilling these conditions can be impossible, as it is demonstrated in a three-agent test system \cite{distributednash_roy}. As an alternative to Nash output strategies, the works in \cite{distributednash_roy, Roy_outputnash} develop the concept of network adapted feedback guaranteed cost equilibrium. Using this concept, the work in \cite{distributednash_roy} proposes a iterative algorithm that incorporates Linear Matrix Inequalities (LMIs) to synthesize the output feedback strategies. The work in \cite{Roy_outputnash} thoroughly extends this framework by incorporating unknown disturbances in the MAS dynamics. Although the sophisticated algorithms in \cite{distributednash_roy, Roy_outputnash} can be implementable through for-loops and conditional statements, a more straightforward approach, such as a single optimization problem, would be more efficient to solve for the synthesis of output feedback control and, to the best of our knowledge, there is no such formulation yet.  Hence, in MAS with dynamically coupled and heterogeneous agents, there is still a gap on how to synthesize control strategies that use partial state information. 

Moreover, a common practice seen across the aforementioned works in \cite{capello_nash_approx, Engwerda2007, distributednash_roy, Roy_outputnash, lin_pursuitgame, thulasi_distributedgames, capello_distributednash, capello_distributednash_2} is the implementation of their proposed frameworks in small numerical examples, e.g., third-order model with five agents, leaving the scalability of their frameworks unexplored. Hence, it is an open question whether their proposed frameworks can be solved or converge for more realistic MAS, which are modeled by high-order dynamical systems with multiple agents. Aside from this scalability question, the prior work and well-established software such as MATLAB fall short of providing a publicly available toolkit for the control synthesis of MAS from a differential game perspective. Built-in functions in Python and MATLAB for traditional control synthesis such as LQR and $H_{\infty}$ are offered; however, packages for control synthesis for MAS using differential games are limited. Filling this gap would substantially facilitate the further investigation and implementation of differential games in practical industry and research settings.   





\section{Contributions}
The major contributions of this work are the following:
    \begin{enumerate}
        \item We propose a novel SDP model that synthesizes a set of linear feedback control strategies using a Nash equilibrium for a LTI networked MAS with restrictive communication. As the proposed SDP uses a Nash equilibrium solution, the resulting strategies aim to capture the self-interested nature of the agents in NLQGs. Moreover, the resulting strategies ensure the stability of the closed-loop system and dissipativity when a disturbance is present in the MAS. Compared to \cite{thulasi_distributedgames, onlythulasi_game_distributed, engwerda_outputnash, distributednash_roy, Roy_outputnash} which propose iterative methods or do not explicitly solve the equations for the synthesis of output feedback strategies in NLQGs, we put forth a SDP model which results more straight-forward to solve using off-the-shelf solvers. This proposed SDP can be also applied to a MAS with dynamically coupled agents, a scenario not accommodated in previous works \cite{lin_pursuitgame, wang_distributed_pursuer, onlythulasi_game_distributed, capello_distributednash, capello_distributednash_2}.
        \item We showcase the time-domain performance and cost-effectiveness of the proposed control frameworks  compared with other control frameworks for MAS. 
        Unlike previous works \cite{capello_nash_approx, Engwerda2007, distributednash_roy, Roy_outputnash, lin_pursuitgame, thulasi_distributedgames, capello_distributednash, capello_distributednash_2} that implement their framework in small-scale cases, our work tests the scalability of the proposed framework in a more realistic setting. To this end, we  design
        a distributed control scheme for frequency regulation in a modified IEEE 24-bus power grid. Subsequently, we implement the designed control scheme in a high-fidelity model of the power grid, which models the non-linear electromagnetic transient (EMT) dynamics of the grid's components. Furthermore, we compare the controller with classical decentralized frequency regulation strategies. Through the implementation in a realistic MAS, our work does not remain theoretical in nature, but provide practical contributions.
        \item We provide an open-access toolbox of the proposed control framework. Practitioners and researchers can use this toolkit to readily implement and benchmark the proposed control framework. 
    \end{enumerate}

\section{Nomenclature}
Let $\mathbb{R}$ and $\mathbb{C}$ denote the set of real and complex numbers, respectively. The notation $|\mathcal{C}|$ indicates the cardinality of the set $\mathcal{C}$. The matrices $0_{m \times n}$ and $I_{n}$ denote the zero matrix of dimension $m \times n$ and the identity matrix of dimension $n \times n$, respectively. A scalar, a vector, and a matrix are denoted by $x$, $\vct{x}$ and $X$, respectively. The construct 
$B=$ diag($A_1, ..., A_n$) returns the block diagonal matrix $B$ by aligning the  matrices $A_1, ..., A_n$ along the diagonal of $B$. A norm is denoted by $\norm{ \cdot }$. $L_{2(a,b)}$ represents the set of square integrable functions, i.e., $\int_a^b |f(t)|^2 dt < \infty$. The dependence on time is omitted when it can
be inferred from the context, e.g., $\vct{x}(t)$ is
equivalent to $\vct{x}$. 
\section{Problem Statement}
\label{sec_problem_statement}
Let the dynamics of a MAS be described by the following LTI system:
    \begin{align}
        \dot{{\vct{{x}}}} & = A\vct{x} + \sum_{i \in \mathcal{C}}B_i \vct{u}_i, \label{eq_mas_dyn1} \\
        \vct{y}_{i} & = C_{i} \vct{x} + D_{i}\vct{d}_{i}, \label{eq_mas_dyn2}
    \end{align} 
where $\mathcal{C} =\{1, ..., |\mathcal{C}| \}$ represents the set of agents, $\vct{x} \in \mathbb{R}^n$ contains the states of the dynamics of the agents and couplings, $\vct{u} _i \in \mathbb{R}^{m_i}$ is the input of agent $i \in \mathcal{C}$, $\vct{y}_i \in \mathbb{R}^{p_i}$ is the output of agent $i \in \mathcal{C}$, and $\vct{d}_i \in \mathbb{R}^{m_i}$ is a disturbance to the output of agent $i \in \mathcal{C}$. Thus, $A \in \mathbb{R}^{n \times n}$, $B_i \in \mathbb{R}^{n \times m_i}$, $C_i \in \mathbb{R}^{p_i \times n}$, and $D_i \in \mathbb{R}^{p_i \times m_i}$. The LTI system in \eqref{eq_mas_dyn1}-\eqref{eq_mas_dyn2} has the following characteristics: 
\begin{enumerate}[(C1)]
\item the matrix pairs $(A, \begin{bmatrix}
    B_1 & ... & B_{|\mathcal{C}|}
\end{bmatrix})$ and $(A, diag(C_1, ..., C_{|\mathcal{C}|})$ are stabilizable and detectable, respectively,
\item $A$ is not necessarily a block diagonal matrix, thus the dynamics of the agents can be coupled,
\item the entries of $A$ can be different, then the agents and couplings can have heterogeneous dynamics,
\item $C_i \ \forall i \in \mathcal{C}$ is not necessarily equal to $I_n$, thus the agents have partial access to the system state $\vct{x}$, and
\item the disturbance has finite energy, i.e., $\vct{d}_i \in L^{m_i}_{2[0, \infty)} \forall i \in \mathcal{C}$.
\end{enumerate}
Furthermore, each agent $i \in \mathcal{C}$ is endowed with a quadratic cost function:
\begin{equation} \label{eq_functional_player}
   f_i(\vct{x}_0, \vct{u}_1, ..., \vct{u}_{|\mathcal{C}|}) = \int_0^{\infty}\vct{x}^\top Q_i \vct{x} + \vct{u}^\top_i R_i \vct{u}_i dt,
\end{equation}
where $Q_i \succeq 0$, $R_i \succ 0$, and $\vct{x}_0$ is the initial state for \eqref{eq_mas_dyn1}-\eqref{eq_mas_dyn2}. The objective is to determine the set of static matrices $\mathcal{F} = \{F_1, ..., F_{|\mathcal{C}|} \}$ where $F_i \in \mathbb{R}^{m_i \times p_i}$ such that the output feedback strategy $\vct{u}_i = F_i \vct{y}_{i} \ \ \forall  i\in \mathcal{C}$ satisfies the following: 
\begin{enumerate}[(O1)]
    \item it renders the closed-loop system stable, i.e., $\lambda(A+ \sum_{i \in \mathcal{C}} B_iF_iC_i) \in \mathbb{C}_{-}$, 
    \item it ensures dissipativity with supply function $\vct{d}^\top R\vct{d}$, where $\vct{d}=\begin{bmatrix} \vct{d}_1^\top & ... & \vct{d}_{|\mathcal{C}|}^\top\end{bmatrix}^\top$ and $R = diag(R_1, ...., R_{|\mathcal{C}|})$, and
    \item  it minimizes the upper bound difference between $f^\mathcal{F}_i$ and $f^{\mathrm{Nash}}_i \ \forall i \in \mathcal{C}$. The term $f^\mathcal{F}_i \in \mathbb{R}_{\geq0}$ is the cost function of agent $i \in \mathcal{C}$ that results from using $\vct{u}_i = F_i\vct{y}_i$ in \eqref{eq_functional_player}, and $f^{\mathrm{Nash}} _i\in \mathbb{R}_{\geq0}$ is the cost function of agent $i \in \mathcal{C}$ that results from using a Nash equilibrium strategy with complete state information and no disturbance. 
\end{enumerate}

\section{Control Synthesis}
First, we present a brief review on how to  determine a Nash equilibrium solution in NLQGs \cite{lineargames_engwerda, thulasi_constructive_nash}. In these games, each agent seeks to minimize its individual objective function in \eqref{eq_functional_player} subject to the system dynamics in \eqref{eq_mas_dyn1}-\eqref{eq_mas_dyn2} where $C_i=I_n$ and $D_i = 0_{p_i \times m_i}$ $\forall i \in \mathcal{C}$. To find a Nash equilibrium solution, it requires solving the Hamilton-Jacobi-Isaacs partial differential equations, which in NLQGs, they reduce to solving the set of CAREs for $P_i$:
\begin{align}
\Big(
{A}- \sum_{j \in \mathcal{C}_{-i} } & B_jR_j^{-1}B_j^\top P_{j} \Big)^\top P_{i} + P_{i} \Big({A} - \sum_{j \in \mathcal{C}_{-i} }   B_jR_j^{-1}B_j^\top P_{j} \Big) \notag \\ 
&- P_{i} B_iR_i^{-1}B_i^\top P_{i} + Q_{i}  = 0_{n \times n}, \forall i \in \mathcal{C}, 
\label{eq_coupled_ricky}
\end{align}
where the set $\mathcal{C}_{-i} = \{1, ..., i-1, i+1, ..., |\mathcal{C}| \}$. The set of matrices $P_i \succeq 0 \ \forall i \in \mathcal{C}$ enables the determination of the linear state-feedback Nash seeking strategies:
\begin{equation} \label{eq_linear_feed_nash}
    \vct{u}^{\mathrm{Nash}}_{i} =-R^{-1}_{i}B^\top_{i}P_{i}\vct{x}, \ \forall i \in \mathcal{C}. 
\end{equation}
Using these Nash strategies, the cost function of an agent $i \in \mathcal{C}$ is calculated as: $f^{\mathrm{Nash}}_i = \vct{x}_0^\top P_i \vct{x}_0$. Note that $\vct{u}^{\mathrm{Nash}}_{i} \ \forall i \in \mathcal{C}$ uses complete state information of the MAS. Moreover, $\vct{u}^{\mathrm{Nash}}_{i} \ \forall i \in \mathcal{C}$ has been determined without considering any disturbance in the system. In practice, in a real-world MAS, it is idealistic to implement $\vct{u}^{\mathrm{Nash}}_{i}$ and to attain the cost $f^{\mathrm{Nash}}_i \ \forall i \in \mathcal{C}$. However, the lemmas presented in this work demonstrate that the set of matrices $\{P_1, ..., P_{|\mathcal{C}|} \}$ from solving \eqref{eq_coupled_ricky} can be used as input data to synthesize output-feedback feedback controllers of the form $\vct{u}_i = F_i \vct{y}_i$, which uses partial information  $\vct{y}_i$ of MAS. First, we present Lemma \ref{lemma_stabilizing}.



\begin{lemma}
\label{lemma_stabilizing}
 Let the dynamics of a MAS be described by \eqref{eq_mas_dyn1}-\eqref{eq_mas_dyn2} with the characteristics (C1)-(C5) defined in Section \ref{sec_problem_statement}, if a set of matrices $\mathcal{F} = \{ F_1, ..., F_{|\mathcal{C}|} \}$ for the construction of $\vct{u}_i = F_i\vct{y}_i \ \forall i \in \mathcal{C}$ satisfies: 
\begin{align}
    \begin{bmatrix}
        \Gamma_i + \Upsilon_i & P_i \bar{B}_i & P_i\bar{D}_{-i}  \\
        \bar{B}^\top_i P_i & R_i & 0 \\
        \bar{D}_{-i}^\top P_i & 0 & \bar{R}_{-i}
    \end{bmatrix} \succ 0  \text{ for at least one $i \in \mathcal{C}$,} \label{eq_synthesisF}
\end{align}
then the set of matrices $\mathcal{F}$ satisfies (O1) and (O2), where for all $i \in \mathcal{C}$: 
\begin{itemize}
    \item $\Gamma_i = Q_{i} - P_{i} \left( \sum\limits_{j \in \mathcal{C}_{-i} }  B_jR_j^{-1}B_j^\top P_j + \sum\limits_{ j \in \mathcal{C}}  B_jF_jC_j \right)^\top$ \\
    $- \left( \sum\limits_{j \in \mathcal{C}_{-i} }  P_jB_jR_j^{-1} B_j^\top + \sum\limits_{ j \in \mathcal{C}}  C_j^\top F_j^\top B_j^\top \right) P_i$,
    \item $\Upsilon_i = P_i B_i \left(F_i D_i R_i^{-1} + R_i^{-1}D_i^\top F_i^\top \right) B_i^\top P_i$,
    \item $\bar{B}_i = B_i(I_{m_i} + F_i D_i)$, 
    \item $\bar{D}_{-i} = $ \\
       $\text{\small $[B_1F_1D_1 \ ... \ B_{i-1}F_{i-1}D_{i-1} \ \ B_{i+1}F_{i+1}D_{i+1} \ ... \ B_{|\mathcal{C}|}F_{|\mathcal{C}|}D_{|\mathcal{C}|}] $}$
    \item $\bar{R}_{-i} =  diag(R_1, ..., R_{i-1}, R_{i+1}, ..., R_{|\mathcal{C}|})$, and 
    \item $P_{i}$ is a solution to the CAREs \eqref{eq_coupled_ricky}.
\end{itemize}
\end{lemma}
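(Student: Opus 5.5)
The plan is to use $V_i(\vct{x}) = \vct{x}^\top P_i \vct{x}$ as a storage/Lyapunov function. Here $i$ is the index for which \eqref{eq_synthesisF} holds and $P_i \succeq 0$ solves the CAREs \eqref{eq_coupled_ricky}. The argument converts \eqref{eq_synthesisF} into a dissipation inequality along the closed-loop trajectories.

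First, I substitute $\vct{u}_j = F_j(C_j\vct{x} + D_j\vct{d}_j)$ into \eqref{eq_mas_dyn1}. This gives
\[
\dot{\vct{x}} = A_{cl}\vct{x} + \hat{D}\vct{d}, \qquad A_{cl} = A + \sum_{j\in\mathcal{C}} B_jF_jC_j,
\]
\[
\hat{D} = [B_1F_1D_1 \ \cdots \ B_{|\mathcal{C}|}F_{|\mathcal{C}|}D_{|\mathcal{C}|}].
\]
Next, I use the $i$-th CARE to eliminate $A^\top P_i + P_iA$. The equation gives
\[
A^\top P_i + P_i A = -Q_i + S_i^\top P_i + P_i S_i + P_iB_iR_i^{-1}B_i^\top P_i, \qquad S_i=\sum_{j\in\mathcal{C}_{-i}}B_jR_j^{-1}B_j^\top P_j.
\]
Hence
\[
A_{cl}^\top P_i + P_iA_{cl} = -\Gamma_i + P_iB_iR_i^{-1}B_i^\top P_i,
\]
where $\Gamma_i$ is read in its symmetrized form. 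Only the quadratic form $\vct{x}^\top(\cdot)\vct{x}$ matters, so the transposition convention in the definition of $\Gamma_i$ is harmless. Checking this bookkeeping carefully, including the symmetry of the whole LMI block, is the step I expect to need the most care.

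Second, I take the Schur complement of \eqref{eq_synthesisF} with respect to the positive definite block $\mathrm{diag}(R_i,\bar{R}_{-i})$. This gives
\[
\Gamma_i+\Upsilon_i - P_i\bar{B}_iR_i^{-1}\bar{B}_i^\top P_i - P_i\bar{D}_{-i}\bar{R}_{-i}^{-1}\bar{D}_{-i}^\top P_i \succ 0 .
\]
Expanding $\bar{B}_iR_i^{-1}\bar{B}_i^\top$ produces three pieces. The first is $B_iR_i^{-1}B_i^\top$. The second is the cross term $B_i(F_iD_iR_i^{-1}+R_i^{-1}D_i^\top F_i^\top)B_i^\top$, which is exactly cancelled by $\Upsilon_i$. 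The third is $B_iF_iD_iR_i^{-1}D_i^\top F_i^\top B_i^\top$, which combines with the $\bar{D}_{-i}$ term into $\hat{D}R^{-1}\hat{D}^\top$. Combining this with the identity from the first step, the LMI becomes
\[
A_{cl}^\top P_i + P_iA_{cl} + P_i\hat{D}R^{-1}\hat{D}^\top P_i \prec 0 .
\]

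Third, I complete the square:
\[
\dot V_i - \vct{d}^\top R\vct{d} = \vct{x}^\top(A_{cl}^\top P_i+P_iA_{cl})\vct{x} + 2\vct{x}^\top P_i\hat{D}\vct{d} - \vct{d}^\top R\vct{d}
\le \vct{x}^\top(A_{cl}^\top P_i+P_iA_{cl}+P_i\hat{D}R^{-1}\hat{D}^\top P_i)\vct{x} \le 0 .
\]
Integrating from $0$ to $T$ gives $V_i(\vct{x}(T)) - V_i(\vct{x}_0) \le \int_0^T \vct{d}^\top R\vct{d}\,dt$ with storage $V_i\succeq 0$. This is dissipativity with supply $\vct{d}^\top R\vct{d}$, i.e. (O2).

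For (O1), the last inequality gives $A_{cl}^\top P_i + P_iA_{cl}\prec 0$. Since $P_i$ is only known to be positive semidefinite, I avoid the standard Lyapunov theorem and argue directly. If $A_{cl}v=\lambda v$ with $v\neq 0$ and $\mathrm{Re}\,\lambda\ge 0$, then $v^*(A_{cl}^\top P_i+P_iA_{cl})v = 2\,\mathrm{Re}\,\lambda\, v^*P_iv \ge 0$. This contradicts negative definiteness, so $\lambda(A_{cl})\in\mathbb{C}_-$.
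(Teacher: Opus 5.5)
Your proposal is correct and follows the paper's proof essentially step for step. The shared steps are: Schur complement with respect to $\mathrm{diag}(R_i,\bar{R}_{-i})$, cancellation of the cross term by $\Upsilon_i$, and substitution of the $i$-th CARE to reach $A_{cl}^\top P_i + P_iA_{cl} + P_i\hat{D}R^{-1}\hat{D}^\top P_i \prec 0$, followed by a Lyapunov argument for (O1) and the storage function $V_i=\vct{x}^\top P_i\vct{x}$ for (O2). Your completed square is equivalent to the paper's block-matrix form, and your eigenvector argument is a cleaner way to handle $P_i\succeq 0$.

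One correction: the stray transpose in the first term of $\Gamma_i$ is not harmless at the level of quadratic forms. In general $\vct{x}^\top P_iM^\top\vct{x}=\vct{x}^\top MP_i\vct{x}\neq\vct{x}^\top P_iM\vct{x}$, with $M$ the bracketed sum. It should simply be treated as a typo, reading $\Gamma_i = Q_i - P_iM - M^\top P_i$, which is also the reading the paper's proof implicitly needs when it invokes the CARE.
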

\begin{proof}
As $R_i \succ 0$, using Schur complement formula\cite[Theorem 1.10]{Dullerud_convex_book} in \eqref{eq_synthesisF} results: 
    \begin{align} \label{eq_gamma_ups_pbr}
        & \text{$\Gamma_i + \Upsilon_i  - P_i\bar{B}_iR_i^{-1}\bar{B}_i^\top P_i$} \notag \\
        & \text{ $- \sum_{j \in \mathcal{C}_{-i} }  P_iB_jF_jD_jR_j^{-1}D_j^\top F_j^\top B_j^\top P_i \succ 0,$}         
    \end{align}
    for at least one $i \in \mathcal{C}$. Then replacing $\Gamma_i$, $\Upsilon_i$, $\bar{B}_i$ in \eqref{eq_gamma_ups_pbr} with their respective definitions, \eqref{eq_gamma_ups_pbr} becomes:
    \begin{align} \label{eq_upsi_pi}
        & \text{\small $Q_{i} - P_{i} \Bigg(\sum_{j \in \mathcal{C}_{-i} }  B_jR_j^{-1}B_j^\top P_j + \sum_{ j \in \mathcal{C}}  B_jF_jC_j \Bigg)^\top$} \notag \\
        & \text{\small $- \Bigg( \sum_{j \in \mathcal{C}_{-i} }  P_jB_jR_j^{-1} B_j^\top  + \sum_{ j \in \mathcal{C}}  C_j^\top F_j^\top B_j^\top \Bigg) P_i$}  \notag \\
        & \text{\small $- P_iB_iR_i^{-1}B_i^\top P_i  - \sum_{j \in \mathcal{C}}  P_iB_jF_jD_jR_j^{-1}D_j^\top F_j^\top B_j^\top P_i \succ 0$}, 
    \end{align}
    for at least one $i \in \mathcal{C}$. Furthermore, the set of CAREs in \eqref{eq_coupled_ricky} can be rewritten as:
\begin{align} \label{eq_cares}
    & \text{\small $ Q_i - P_iB_iR_i^{-1}B_i^\top P_i - P_i \Bigg( \sum_{j \in \mathcal{C}_{-i} }  B_j R_j^{-1} B_{j}^\top P_j \Bigg)$} \notag \\ 
    & \text{\small $- \Bigg( \sum_{j \in \mathcal{C}_{-i} }  B_j R_j^{-1} B_{j}^\top P_j \Bigg) ^\top  P_i = -(P_iA + A^\top P_i).$}
\end{align}
Using \eqref{eq_cares} in \eqref{eq_upsi_pi} results in:
    \begin{align} \label{eq_pa_closed}
        &- P_i(A+\sum\limits_{ j \in \mathcal{C}}  B_j F_j C_j) - (A + \sum\limits_{ j \in \mathcal{C}}  B_j F_j C_j)^\top P_i \notag \\
        &-    \sum_{j =1}  P_iB_jF_jD_jR_j^{-1}D_j^\top F_j^\top B_j P_i \succ 0, 
    \end{align}
    for at least one $i \in \mathcal{C}$. Next, we define the following matrices:
    \begin{align}
        A_c & = A+\sum\limits_{ j \in \mathcal{C}}  B_j F_j C_j, \label{eq_def_Ac} \\
        N  & = \begin{bmatrix}
        B_1F_1D_1 & ... & B_{|\mathcal{C}|} F_{|\mathcal{C}|} D_{|\mathcal{C}|}
    \end{bmatrix}, \label{eq_def_N} \\
        R & = diag(R_1, ..., R_{|\mathcal{C}|}). \label{eq_def_R}
    \end{align}
    Subsequently, using \eqref{eq_def_Ac}, \eqref{eq_def_N}, and \eqref{eq_def_R} in \eqref{eq_pa_closed} results in:
    \begin{align}  \label{eq_cl_p}
         & -P_i A_c - A_c^\top P_i - P_i N R^{-1} N^\top P_i \succ 0, 
    \end{align}
     for at least one $i \in \mathcal{C}$, which implies $P_i A_c + A_c^\top P_i \prec 0$, as $R \succ 0$ and $P_i \succeq 0$. By Lyapunov stability condition, $\lambda(A_c) \in \mathbb{C}_{-}$, which proves that the set of matrices $\mathcal{F}$ satisfies (O1). Note that \eqref{eq_cl_p} has to be satisfied for at least one agent $i \in \mathcal{C}$ so that the Lyapunov stability condition holds for $A_c$. 

    Next, we prove that the set of matrices $\mathcal{F}$ satisfies (O2). By Schur complement, \eqref{eq_cl_p} can be recast as
    \begin{align} \label{eq_neg_acl}
        \begin{bmatrix}
            P_i A_c + A_c^\top P_i & P_i N \\
            N^\top P_i         & -R
        \end{bmatrix} \prec 0, 
    \end{align}
    for at least one $i \in \mathcal{C}$. By the definitions in \eqref{eq_def_Ac}, \eqref{eq_def_N}, and \eqref{eq_def_R}, the closed-loop system of the MAS is represented by:
    \begin{equation} \label{eq_closed_Ac_N}
         \dot{\vct{{x}}} = A_c \vct{x} + N \vct{d}
    \end{equation}
    where $\vct{d}=\begin{bmatrix} \vct{d}_1^\top & ... & \vct{d}_{|\mathcal{C}|}^\top\end{bmatrix}^\top$. We define the storage function $V_i(\vct{x}) = \vct{x}^\top P_i\vct{x}$. Taking the Lie derivative of $V_i(\vct{x})$, using \eqref{eq_def_R}, and replacing $\dot{\vct{{x}}}$ with \eqref{eq_closed_Ac_N} results
    \begin{align}
        \dot{V}_i(\vct{x}) & = \dot{\vct{x}}^\top P_i \vct{x} + \vct{x}^\top P_i \dot{\vct{x}}  \label{eq_dev_V}\\
        & = \begin{bmatrix}
            \vct{x} \\ \vct{d}
        \end{bmatrix}^\top \begin{bmatrix}
            A_c^\top P_i+P_i A_c & P_i N \\
            N^\top P_i R & -R
        \end{bmatrix} \begin{bmatrix}
            \vct{x} \\ \vct{d}
        \end{bmatrix} + \vct{d}^\top R \vct{d}, \forall i \in \mathcal{C}. \label{eq_vdot_x_d} 
    \end{align}
Passing the term $\vct{d}^\top R \vct{d}$ to the left-hand side in \eqref{eq_vdot_x_d} and using \eqref{eq_neg_acl} results in  $\dot{V}_i(\vct{x}) < \vct{d}^\top R \vct{d}$. By the definition of dissipativity \cite{willems_dissipativity}, this inequality implies that the closed-loop system in \eqref{eq_closed_Ac_N} is dissipative for the disturbance $\vct{d}$ with the supply function $W(\vct{d}) = \vct{d}^\top R \vct{d}$. Hence, the set of matrices $\mathcal{F}$ satisfies (O2).
\end{proof}

\begin{remark}
    If there is only one agent, i.e., $\mathcal{C}=\{ 1 \}$, the NLQG reduces to a linear quadratic regulator (LQR) problem. Note that \eqref{eq_mas_dyn1}-\eqref{eq_mas_dyn2} reduce to 
     \begin{align}
            \dot{\vct{x}} & = A\vct{x} + B\vct{u}, \label{eq_dyn1_oneagent}\\
            \vct{y} & = C\vct{x} + D\vct{d}. \label{eq_dyn2_oneagent}
        \end{align}
    Moreover, the CAREs in \eqref{eq_coupled_ricky} reduce to a standard ARE. Replacing $\Gamma_i$ with its definition for $i=1$, the LMI in \eqref{eq_synthesisF} reduces to:
    \begin{align}
         \begin{bmatrix}
        Q - PBFC -C^\top F^\top B^\top P + \Upsilon & P \bar{B} \\
        \bar{B}^\top P & R
    \end{bmatrix}  \succ 0. \label{eq_lemma1_one}
    \end{align}
    where $Q$ and $R$ are the state cost and input cost matrices in \eqref{eq_functional_player}, respectively, and $P$ is the solution to the ARE. In fact, the work in \cite{Rodrigues_LQR} proposes \eqref{eq_lemma1_one} for the synthesis of output feedback control in the LQR problem. 
     It is worth mentioning that the work in \cite{Rodrigues_LQR} has been served as initial motivation for the development of Lemma \ref{lemma_stabilizing}.
\end{remark}
However, the set of matrices $\mathcal{F}$ that results from Lemma \ref{lemma_stabilizing} does not satisfy (O3). As shown in Remark \ref{rk_impor_cons} at the end of this part, the solely use of the LMI in \eqref{eq_lemma1_one} can result in a set of matrices $\mathcal{F}$ that yield unbounded values for the agents' cost functions.

In Lemma \ref{lemma_f_fnash} we provide additional constraints that prevent this issue and enables the set of matrices $\mathcal{F}$ to ultimately satisfy (O3). First, the following corollary is reviewed.

\begin{corollary}[Exercise 3.13 in \cite{Dullerud_convex_book}]
\label{cor_boundedx_dullerud}
    Consider the system $\dot{\vct{x}}= A\vct{x}+B\vct{u}$ where $\vct{x} \in \mathbb{R}^n$, $\vct{u} \in \mathbb{R}_m$, and $\lambda(A) \in \mathbb{C}_{-}$. If $\vct{u} \in L^m_2[0,\infty)$ then $\lim\limits_{t \to\infty} \vct{x}(t) = 0$.
\end{corollary}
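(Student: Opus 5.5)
We prove the corollary by the variation-of-constants formula together with the exponential decay of $e^{At}$. Since $\lambda(A) \in \mathbb{C}_{-}$, there exist constants $M \geq 1$ and $\alpha > 0$ such that $\norm{e^{At}} \leq M e^{-\alpha t}$ for all $t \geq 0$. The solution is
\begin{equation*}
\vct{x}(t) = e^{At}\vct{x}(0) + \int_0^t e^{A(t-\tau)} B \vct{u}(\tau)\, d\tau .
\end{equation*}
The first term is bounded by $M e^{-\alpha t}\norm{\vct{x}(0)}$, so it tends to zero. All the work is in showing that the convolution term vanishes as $t \to \infty$.

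To handle the convolution, fix $\epsilon > 0$. Since $\vct{u} \in L^m_2[0,\infty)$, we can choose $T$ such that $\int_T^\infty \norm{\vct{u}(\tau)}^2 d\tau < \epsilon^2$. For $t > T$, split the integral into the pieces over $[0,T]$ and $[T,t]$.

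The first piece can be written as $e^{A(t-T)} \int_0^T e^{A(T-\tau)}B\vct{u}(\tau)\,d\tau$. The integral here is a fixed vector, finite by Cauchy--Schwarz because $\vct{u}$ is square integrable on $[0,T]$. Its norm is therefore at most $M e^{-\alpha (t-T)} \cdot \mathrm{const}$, which goes to $0$ as $t\to\infty$.

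For the second piece, apply Cauchy--Schwarz:
\begin{equation*}
\Big\lVert \int_T^t e^{A(t-\tau)}B\vct{u}\,d\tau \Big\rVert \leq M\norm{B} \Big(\int_T^t e^{-2\alpha(t-\tau)}d\tau\Big)^{1/2}\Big(\int_T^t \norm{\vct{u}}^2 d\tau\Big)^{1/2} \leq \frac{M\norm{B}}{\sqrt{2\alpha}}\,\epsilon .
\end{equation*}

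Combining the pieces gives $\limsup_{t\to\infty}\norm{\vct{x}(t)} \leq M\norm{B}\epsilon/\sqrt{2\alpha}$. Since $\epsilon$ is arbitrary, $\vct{x}(t)\to 0$.

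The only step needing care is the tail splitting. A naive bound of the whole convolution by $\norm{\vct{u}}_{L_2}$ gives only boundedness of $\vct{x}$, not decay. The crucial point is to exploit that the $L_2$ tail of $\vct{u}$ can be made small, while the exponential forgetting of $e^{At}$ kills the early contribution.
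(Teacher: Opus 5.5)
Your proof is correct and complete. The paper gives no proof of this statement: it imports it as Exercise 3.13 of Dullerud--Paganini, so there is no in-paper argument to match against.

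Your variation-of-constants argument supplies exactly what is needed. The bound $\norm{e^{At}}\leq Me^{-\alpha t}$ follows from $\lambda(A)\in\mathbb{C}_{-}$. The contribution of $\vct{u}$ on $[0,T]$ is a fixed vector propagated by $e^{A(t-T)}$, so it decays. The contribution on $[T,t]$ is at most $M\norm{B}\epsilon/\sqrt{2\alpha}$ uniformly in $t$, by Cauchy--Schwarz. Your argument also covers a nonzero initial state. This matters because the paper applies the corollary to $\dot{\vct{x}}=A_c\vct{x}+N\vct{d}$ with $\vct{x}(0)=\vct{x}_0$ in the proof of Lemma~\ref{lemma_f_fnash}. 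Since $\vct{u}$ is only locally integrable, solutions are understood in the Carath\'eodory sense, and the variation-of-constants formula still holds there.

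A common alternative route works in $L_2$ directly. First show $\vct{x}\in L_2$, e.g.\ by Young's inequality, since $e^{A\cdot}B\in L_1$ and $\vct{u}\in L_2$. Then $\dot{\vct{x}}=A\vct{x}+B\vct{u}\in L_2$, so $\frac{d}{dt}\norm{\vct{x}}_2^2=2\vct{x}^\top\dot{\vct{x}}$ is integrable. Hence $\norm{\vct{x}(t)}_2^2$ converges, and the limit must be $0$ because $\vct{x}$ is square integrable.

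Your argument is more elementary and gives the pointwise limit directly. The alternative yields $\vct{x}\in L_2$ along the way, which is the other property the paper uses in the proof of Lemma~\ref{lemma_f_fnash} (finiteness of $\int_0^\infty\norm{\vct{x}}_2^2\,dt$). Your estimates extend to that with one application of Young's inequality.
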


\begin{lemma}
\label{lemma_f_fnash}
     Let $\vct{u}_i = F_i \vct{y}_i$ $\forall i \in \mathcal{C}$ in \eqref{eq_mas_dyn1}-\eqref{eq_mas_dyn2} where the set of matrices $\mathcal{F}= \{ F_1, ..., F_{|\mathcal{C}|} \}$ satisfies Lemma \ref{lemma_stabilizing}, then 
     \begin{align} 
        f^{\mathcal{F}}_i \leq f^{\mathrm{Nash}}_i + k_1 g^2_i + k_2  h^2_i + k_3 g_i h_i \notag\\
        + \sum_{ j \in \mathcal{C}_{-i}} a_j g_j +   b_j h_j, \forall i \in \mathcal{C},
        \label{eq_lemma_f_fnash}
     \end{align}
     where $f^{\mathcal{F}}_{i}$, $f^{\mathrm{Nash}}_{i}$ are defined in (O3) in Section \ref{sec_problem_statement}, $k_1$, $k_2$, $k_3$, $l_j$, $h_j$ $\forall j \in \mathcal{C}_{-i}$ are constants independent of $F_i$ and time, 
     $g_i = \norm{ R_i^{-\frac{1}{2}}B_i^\top P_i + R_i^{\frac{1}{2}} F_i C_i }_2$, and $h_i = \norm{R_i^{\frac{1}{2}} F_i D_i}_2$. 
\end{lemma}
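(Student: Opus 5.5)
The plan is to compare agent $i$'s realized cost with $\vct{x}_0^\top P_i\vct{x}_0$ by integrating $\frac{d}{dt}(\vct{x}^\top P_i\vct{x})$ along the closed loop and completing the square against the CARE \eqref{eq_coupled_ricky}. Under $\vct{u}_j=F_j\vct{y}_j=F_jC_j\vct{x}+F_jD_j\vct{d}_j$, the closed loop is \eqref{eq_closed_Ac_N}. By Lemma \ref{lemma_stabilizing}, $A_c$ is Hurwitz, and by (C5) $\vct{d}\in L_2$. Corollary \ref{cor_boundedx_dullerud} then gives $\vct{x}(t)\to 0$, so $\vct{x}(\infty)^\top P_i\vct{x}(\infty)=0$ and
\begin{align*}
f^{\mathcal{F}}_i=\vct{x}_0^\top P_i\vct{x}_0+\int_0^\infty \Big(\vct{x}^\top Q_i\vct{x}+\vct{u}_i^\top R_i\vct{u}_i+\tfrac{d}{dt}\vct{x}^\top P_i\vct{x}\Big)dt .
\end{align*}
Standard $L_2$ theory for stable LTI systems also gives $\norm{\vct{x}}_{L_2}\le c_0\norm{\vct{x}_0}+c_1\norm{\vct{d}}_{L_2}$, which will bound the integrals below.

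Expanding the integrand, I use the CARE \eqref{eq_cares} to replace $Q_i+A^\top P_i+P_iA$. The $\vct{x}^\top(\cdot)\vct{x}$ part, together with $\vct{x}^\top C_i^\top F_i^\top R_iF_iC_i\vct{x}$ and the cross terms $2\vct{x}^\top P_iB_iF_iC_i\vct{x}$, should collapse to $\vct{x}^\top(R_i^{-1/2}B_i^\top P_i+R_i^{1/2}F_iC_i)^\top(R_i^{-1/2}B_i^\top P_i+R_i^{1/2}F_iC_i)\vct{x}=\norm{E_i\vct{x}}^2$, where $E_i:=R_i^{-1/2}B_i^\top P_i+R_i^{1/2}F_iC_i$. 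This is exactly the LQR completion of squares, with $R_i^{-1/2}$ inserted. The coupling terms for $j\neq i$ are $2\vct{x}^\top P_iB_j(F_jC_j+R_j^{-1}B_j^\top P_j)\vct{x}=2\vct{x}^\top P_iB_jR_j^{-1/2}E_j\vct{x}$. The disturbance terms are:
\begin{itemize}
\item $2\vct{x}^\top C_i^\top F_i^\top R_iF_iD_i\vct{d}_i$ and $2\vct{x}^\top P_iB_iF_iD_i\vct{d}_i$, which combine to $2(E_i\vct{x})^\top(R_i^{1/2}F_iD_i)\vct{d}_i$;
\item $\vct{d}_i^\top D_i^\top F_i^\top R_iF_iD_i\vct{d}_i$;
\item $2\vct{x}^\top P_iB_jF_jD_j\vct{d}_j=2\vct{x}^\top P_iB_jR_j^{-1/2}(R_j^{1/2}F_jD_j)\vct{d}_j$ for $j\neq i$.
\end{itemize}

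Next I bound each piece by Cauchy--Schwarz in $L_2$:
\begin{itemize}
\item $\int\norm{E_i\vct{x}}^2\le g_i^2\norm{\vct{x}}_{L_2}^2$;
\item $\int 2(E_i\vct{x})^\top R_i^{1/2}F_iD_i\vct{d}_i\le 2g_ih_i\norm{\vct{x}}_{L_2}\norm{\vct{d}_i}_{L_2}$;
\item $\int\vct{d}_i^\top(\cdot)\vct{d}_i\le h_i^2\norm{\vct{d}_i}^2_{L_2}\,\norm{R_i^{-1/2}}^2$-type factors;
\item the $j\neq i$ terms are at most $a_jg_j+b_jh_j$ with $a_j=2\norm{P_iB_jR_j^{-1/2}}\norm{\vct{x}}^2_{L_2}$ and $b_j=2\norm{P_iB_jR_j^{-1/2}}\norm{\vct{x}}_{L_2}\norm{\vct{d}_j}_{L_2}$.
\end{itemize}
Setting $k_1=\norm{\vct{x}}_{L_2}^2$, $k_2$ from the $\vct{d}_i$ energy, and $k_3=2\norm{\vct{x}}_{L_2}\norm{\vct{d}_i}_{L_2}$ yields \eqref{eq_lemma_f_fnash}.

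The main obstacle is the claim that $k_1,k_2,k_3,a_j,b_j$ are independent of $F_i$. $\norm{\vct{x}}_{L_2}$ depends on $A_c$, which does involve $F_i$. I would first try to use the LMI \eqref{eq_synthesisF} (via \eqref{eq_cl_p}) to get an $F$-uniform bound. Dissipativity gives $\vct{x}(t)^\top P_i\vct{x}(t)\le \vct{x}_0^\top P_i\vct{x}_0+\norm{\vct{d}}^2_{R}$, and the strict inequality \eqref{eq_cl_p} supplies a decay margin. If only a bound depending on the feasible set is available, I would interpret the constants as uniform over that set, i.e. as constants in the sense of (O3), treating $\norm{\vct{x}}_{L_2}$ as a fixed bound. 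A further subtlety is that the bounded-state argument needs a $P_i$ satisfying the LMI, which the lemma guarantees only for at least one $i$; for the other agents I would rely on Corollary \ref{cor_boundedx_dullerud} alone, since it requires only stability of $A_c$.
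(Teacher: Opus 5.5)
Your proposal is correct and follows essentially the same route as the paper: integrate $\dot{V}_i$ using $\vct{x}(t)\to 0$ from Corollary~\ref{cor_boundedx_dullerud}, use the CARE \eqref{eq_coupled_ricky} to collapse the integrand into $\norm{\vct{p}_i+\vct{q}_i+\vct{r}_i}_2^2+2\sum_{j\in\mathcal{C}_{-i}}\vct{p}_{ij}^\top(\vct{p}_j+\vct{q}_j+\vct{r}_j)$, and bound each piece by Cauchy--Schwarz. Two points in your version are more careful than the paper. First, you use $L_2$ Cauchy--Schwarz, so your constants are built from $\norm{\vct{x}}_{L_2}$ and $\norm{\vct{d}_j}_{L_2}$; the paper instead asserts that $\int_0^\infty\norm{\vct{x}}_2\,dt$ is bounded, which can fail for a general $L_2$ disturbance. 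Second, you explicitly handle the $F$-dependence of $\norm{\vct{x}}_{L_2}$ through the margin in \eqref{eq_cl_p}, whereas the paper simply asserts that its constants are independent of $F_i$.
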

\begin{proof}
By Lemma \ref{lemma_stabilizing}, the set of matrices $\mathcal{F}$ renders $\lambda \left(A_c \right) \in \mathbb{C}_-$. By definition in (C5), $\vct{d}_j \in L^m_2[0, \infty)$. Hence, using the fundamental theorem of calculus for the storage function $V_i(\vct{x}) = \vct{x}^\top P_i \vct{x}$ and applying Corollary \ref{cor_boundedx_dullerud} to \eqref{eq_closed_Ac_N} results in:
     \begin{equation} \label{eq_limxp}
        \int_0^\infty \dot{V_i}(\vct{x})dt =  \lim_{t \to \infty} \vct{x}_{(t)}^\top P_i \vct{x}_{(t)} -\vct{x}_0^\top P_i \vct{x}_0 = -\vct{x}_0^\top P_i \vct{x}_0, 
        \forall i \in \mathcal{C}.
     \end{equation}
Using $\vct{u}_i = F_i \vct{y}_i$ $\forall i \in \mathcal{C}$, \eqref{eq_mas_dyn2}, and \eqref{eq_limxp} in \eqref{eq_functional_player} results in:
\begin{align} \label{eq_cost_dt}
    f^{\mathcal{F}}_i & = \vct{x}_0P_i\vct{x}_0 + \int_0^{\infty} \{ \dot{V}_i(\vct{x}) + \vct{x}^\top Q_i \vct{x} \notag \\
    & + (F_iC_i\vct{x} + F_iD_i\vct{d}_i)^\top R_i (F_iC_i\vct{x} + F_iD_i\vct{d}_i) \} dt, \forall i \in \mathcal{C}.
\end{align}
Replacing $\dot{V}_i(\vct{x})$ with \eqref{eq_dev_V}, and using \eqref{eq_coupled_ricky} and \eqref{eq_closed_Ac_N} in \eqref{eq_cost_dt} results that $\forall i \in \mathcal{C}$:
    \begin{align} \label{eq_cost_f_unfolded}
        & \text{\small $f^\mathcal{F}_i  = \vct{x}_0P_i\vct{x}_0 + \int_0^\infty \vct{x}^\top P_i B_i R_i^{-1}B_i^\top P_i \vct{x}$} \notag \\
        & \text{\small $+ 2 \vct{x}^\top P_i \Bigg( \sum_{j \in \mathcal{C}_{-i} }  B_jR_j^{-1}B_j^\top P_j \Bigg) \vct{x} + \vct{x}^\top C_i^\top F_i^\top R_i F_i C_i \vct{x}$} \notag \\
        &\text{\small $+ 2 \vct{x}^\top C_i^\top F_i^\top R_i F_i D_i \vct{d}_i + \vct{d}_i^\top D_i^\top F_i^\top R_i F_i D_i \vct{d}_i$} \notag \\
        &\text{\small $+2\vct{x}^\top \Bigg( \sum_{ j \in \mathcal{C}} B_j F_j C_j \Bigg)^\top P_i \vct{x} + 2 \Bigg(\sum_{ j \in \mathcal{C}} B_j F_j D_j \vct{d}_j   \Bigg)^\top P_i \vct{x} \ dt.$}
    \end{align}
    We define $\vct{p}_i = R^{-\frac{1}{2}}_{i}B_i^\top P_i \vct{x}$ and  $\vct{p}_{ij} = R_j^{-\frac{1}{2}}B_j^\top P_i \vct{x}$, note that $\vct{p}_i = \vct{p}_{ii}$. Moreover,  
    $\vct{q}_i = R^{\frac{1}{2}}_i F_i C_i \vct{x}$ and $\vct{r}_i = R_i^{\frac{1}{2}} F_i D_i \vct{d}_i$. Using these definitions in \eqref{eq_cost_f_unfolded} results in:
    \begin{align} 
        f^\mathcal{F}_i & = \vct{x}_0P_i\vct{x}_0 + \int_{0}^\infty \{ \vct{p}_i^\top\vct{p}_i+2\sum_{j \in \mathcal{C}_{-i} } \vct{p}_{ij}^\top \vct{p}_j + \vct{q}_i^\top\vct{q}_i + 2\vct{q}_i^\top\vct{r}_i  \notag \\
        & + \vct{r}_i^\top \vct{r}_i + 2 \sum_{ j \in \mathcal{C}}  \vct{q}_i^
        \top \vct{p}_{ij} + 2 \sum_{ j \in \mathcal{C}}  \vct{r}_i^
        \top \vct{p}_{ij} \} dt, \forall i \in \mathcal{C}.\label{eq_f_pqr}
    \end{align}
     Using the property that $f^{\mathrm{Nash}}_i = \vct{x}_0^\top P_i \vct{x}_0 \ \forall i \in \mathcal{C}$, grouping terms and using the triangle inequality and the Cauchy–Schwarz inequality in \eqref{eq_f_pqr} results in: 
    \begin{align}
        \text{\small $f^{\mathcal{F}}_i$} & = \text{\small $f^{\mathrm{Nash}}_i + \int_0^\infty  \norm{\vct{p}_i + \vct{q}_i + \vct{r}_i}^2_2 \ dt$} \notag \\
        & \text{\small $+ 2\int_0^\infty \sum_{j \in \mathcal{C}_{-i}  }  | \vct{p}_{ij}^\top(\vct{p}_j + \vct{q}_j + \vct{r}_j) | \ dt$}, \label{eq_fnas_f_still} \\
        & \text{\small $\leq f^{\mathrm{Nash}}_i + \int_0^\infty \norm{\vct{p}_i+\vct{q}_i}^2_2 + \norm{\vct{r}_i}^2_2 + 2\norm{\vct{p}_i + \vct{q}_i}_2 \ \norm{\vct{r}_i}_2$} \notag \\
        & \text{\small $+ 2 \sum_{j \in \mathcal{C}_{-i}  }  \int_0^\infty \norm{\vct{p}_{ij}}_2 \norm{\vct{p}_j + \vct{q}_j + \vct{r}_j}_2 \ dt, \forall i \in \mathcal{C}$}. \label{eq_perf_bound}
    \end{align}
As $\lambda ( A_c ) \in \mathbb{C}_- $ and $\vct{d}_i \in L^{m_i}_{2(0, \infty)} \ \forall i \in \mathcal{C}$, $ \int_0^\infty \norm{\vct{x}}_2 \ dt$ and $ \int_0^\infty \norm{\vct{x}}^2_2 \ dt$ are bounded above by a constant independent of $F_i$ and time.  Moreover, the induced matrix norm property states that $\norm{A\vct{x}} \leq \norm{A} \norm{x}$. Using these properties and the definitions of $\vct{p}_i$ and $\vct{q}_i$ results in:
\begin{align}
    \int_0^\infty \norm{\vct{p}_i+ \vct{q}_i}^2_2 dt & =  \int_0^\infty \norm{\left( R_i^{-\frac{1}{2}}B_i^\top P_i + R_i^{\frac{1}{2}} F_i C_i \right) \vct{x}}^2_2 dt, \\ 
    & \leq k_1\norm{ R_i^{-\frac{1}{2}}B_i^\top P_i + R_i^{\frac{1}{2}} F_i C_i }^2_2, \forall i \in \mathcal{C}. \label{eq_bo_pq}
\end{align}
Similarly, using the definition of $\vct{r}_i$ results in:
\begin{align} 
    & \text{\small $\int_0^\infty \norm{\vct{r}_i}_2^2 dt =  \int_0^\infty \norm{R_i^{\frac{1}{2}} F_i D_i \vct{d}_i}_2^2 dt \leq k_2 \norm{R_i^{\frac{1}{2}} F_i D_i}_2^2, \forall i \in \mathcal{C}$} \label{eq_bo_r} \\
    & \text{\small $\int_0^\infty \norm{\vct{p}_i + \vct{q}_{i}}_2 \norm{\vct{r}_i} dt  \leq k_3 \norm{ R_i^{-\frac{1}{2}}B_i^\top P_i + R_i^{\frac{1}{2}} F_i C_i }_2$} \notag \\
    & \ \ \ \ \ \ \ \ \ \ \ \  \ \ \ \ \ \ \ \ \ \ \ \ \ \ \ \ \ \ \ \ \ \ \ \text{\small $\norm{R_i^{\frac{1}{2}} F_i D_i}_2, \forall i \in \mathcal{C}$} \label{eq_bo_pq_pro}.
\end{align}
Similarly, by the triangle inequality, 
\begin{align} \label{eq_bo_inner}
    & \text{\small $ \sum_{j \in \mathcal{C}_{-i}  } \int_0^\infty \norm{\vct{p}_{ij}}_2  \norm{\vct{p}_j + \vct{q}_j  + \vct{r}_j}_2 \ dt$} \\
    & \text{\small $\leq \sum_{j \in \mathcal{C}_{-i}  } \int_0^\infty \norm{R_j^{-\frac{1}{2}}B_j^\top P_i \vct{x}}_2 \left( \norm{\vct{p}_j + \vct{q}_j }_2 + \norm{\vct{r}_j}_2\right) dt,$} \\
    & \text{\small $ \leq \sum_{j \in \mathcal{C}_{-i}  } a_j\norm{ R_j^{-\frac{1}{2}}B_j^\top P_j + R_j^{\frac{1}{2}} F_j C_j }_2 + b_j \norm{R_j^{\frac{1}{2}} F_j D_j}_2,$} \notag \\ 
    & \ \ \ \ \ \text{$\forall i \in \mathcal{C},$}
    \end{align}
    where $k_1$, $k_2$, $a_j$, $b_j$ are constants independent of $F_i$ and time.
Finally, using \eqref{eq_bo_pq}, \eqref{eq_bo_r}, \eqref{eq_bo_pq_pro} and \eqref{eq_bo_inner} in \eqref{eq_perf_bound} results \eqref{eq_lemma_f_fnash}.
\end{proof}

\begin{remark}
    In the idealistic case, where there is perfect state information ($C_i=I_n$ $\forall i \in \mathcal{C}$) and no disturbance ($D_i = 0_{p_i \times m_i}$ $\forall i \in \mathcal{C}$), \eqref{eq_fnas_f_still} reduces to:
    \begin{align}
        & \text{\small $f^{\mathcal{F}}_i = f^{\mathrm{Nash}}_i + \int_0^\infty   \Bigl\{ \norm{R^{-\frac{1}{2}}_{i}B_i^\top P_i \vct{x} +  R^{\frac{1}{2}}_i F_i C_i \vct{x}}^2_2 $} \notag \\
        & \text{\small $+ 2 \sum_{j \in \mathcal{C}_{-i}  }   ( R_j^{-\frac{1}{2}}B_j^\top P_i \vct{x})^\top(R^{-\frac{1}{2}}_{j}B_j^\top P_j + R^{\frac{1}{2}}_j F_j C_j \vct{x}) \Bigl\} \ dt, \forall i \in \mathcal{C}$},\label{eq_remark1_fnash}
    \end{align}
    where the definitions of $\vct{p}_i$, $\vct{q}_i$, $\vct{r}_i$, and $\vct{p}_{ij}$ have been used. In \eqref{eq_remark1_fnash}, note that $f^{\mathcal{F}}_i = f^{\mathrm{Nash}}_i$ when $F_i = R^{-1}_{i}B_i^\top P_i$ $\forall i \in \mathcal{C}$, which is indeed the standard Nash state-feedback strategy presented in \cite{Engwerda_LQbook, thulasi_constructive_nash, capello_nash_approx}. Hence, Lemma \ref{lemma_f_fnash} indicates that the quadratic cost function $f_i^\mathcal{F}$ is equal to the quadratic cost function $f_i^\mathcal{F}$ for the idealistic setting (state-feedback with no disturbance), i.e., $C_i=I_n$ and $D_i = 0_{p_i \times m_i}$ $\forall i \in \mathcal{C}$.
\end{remark}
\begin{corollary}[Section 4.6.3 in \cite{boyd_optimization}]
\label{cor_matrixmin_boyd}
    Let $A(\vct{x}) = A_0 + x_1 A_1 + ... + {x}_n A_n$, where $A_i \in \mathbb{R}^{p \times q}$ and $\vct{x} = \begin{bmatrix}
        x_1 & ... & x_n
    \end{bmatrix}^\top \in \mathbb{R}^n$. The unconstrained problem $\min\limits_{\vct{x}} \norm{A(\vct{x})}_2$ is equivalent to the SDP model: $\min\limits_{\vct{x}, \ t} t$ subject to $\begin{bmatrix}
        t I_p & A(\vct{x}) \\
        A(\vct{x})^\top  & t I_q
    \end{bmatrix} \succeq 0$, where $t \in \mathbb{R}$.
\end{corollary}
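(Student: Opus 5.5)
The statement is the cited fact that minimizing the spectral norm of an affine matrix function is an SDP. I will prove it by establishing, for any fixed $\vct{x}$ and scalar $t$, the equivalence
\begin{equation*}
\norm{A(\vct{x})}_2 \leq t \iff \begin{bmatrix} t I_p & A(\vct{x}) \\ A(\vct{x})^\top & t I_q \end{bmatrix} \succeq 0 .
\end{equation*}
Given this, the feasible set of the SDP is exactly the epigraph $\{(\vct{x},t): \norm{A(\vct{x})}_2 \leq t\}$. Minimizing $t$ over the epigraph returns $\min_{\vct{x}} \norm{A(\vct{x})}_2$, and an optimizer $\vct{x}^\star$ of one problem is an optimizer of the other with $t^\star = \norm{A(\vct{x}^\star)}_2$. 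The constraint is affine in $(\vct{x},t)$ because $A(\vct{x})$ is affine in $\vct{x}$, so this is indeed an SDP.

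For the equivalence I would first handle $t>0$ with the Schur complement formula \cite[Theorem 1.10]{Dullerud_convex_book}, taking the pivot on the $tI_q \succ 0$ block. The block matrix is then positive semidefinite if and only if $tI_p - t^{-1}A A^\top \succeq 0$, that is, $AA^\top \preceq t^2 I_p$. This holds if and only if $\lambda_{\max}(AA^\top) = \norm{A}_2^2 \leq t^2$, which, since $t>0$, is $\norm{A}_2 \leq t$.

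The main obstacle is the degenerate cases, where the strict Schur complement argument does not apply directly.
\begin{itemize}
\item If $t<0$, the diagonal blocks $tI_p$ and $tI_q$ are negative definite, so the LMI fails. The norm inequality fails too, because a norm is nonnegative.
\item If $t=0$, the LMI forces $A=0$: a positive semidefinite matrix with a zero diagonal block must have zero off-diagonal blocks, which one sees from the $2\times 2$ principal minors or by testing vectors of the form $(\alpha u, v)$. Conversely $A=0$ makes the LMI hold. This matches $\norm{A}_2 \leq 0$.
\end{itemize}
A direct alternative that avoids the case split is to test the matrix against vectors $(u,v)$. This gives the condition $t(\norm{u}^2+\norm{v}^2) + 2u^\top A v \geq 0$ for all $u,v$. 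Choosing $u,v$ from the top singular vector pair (with a sign flip) and scaling them to equal norm shows this condition is equivalent to $t \geq \sigma_{\max}(A)$, using $2\norm{u}\norm{v} \leq \norm{u}^2+\norm{v}^2$. I would include this as the cleaner argument.
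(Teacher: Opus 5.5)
Your proof is correct and takes essentially the same route as the paper: the paper gives no argument of its own and only cites \cite[Section 4.6.3]{boyd_optimization}, where the equivalence comes from $\norm{A}_2\le t \iff A^\top A\preceq t^2 I_q$ with $t\ge 0$, plus a Schur complement — the same as your main step (pivoting on $tI_q$ instead of $tI_p$ is immaterial). Your explicit handling of $t\le 0$, and the alternative quadratic-form argument using the top singular pair, simply make rigorous the sign condition that the reference leaves as a parenthetical.
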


\begin{lemma}
\label{lemma_sdp_problem}
We are given the parameters $a$, $b$, and $c \in \mathbb{R}_{>0}$, and the matrices $\Gamma_i$, $\Upsilon_i$, $P_i$, $\bar{B}_{i}$, $\bar{D}_{-i}$ and $\bar{R}_{-i}$ are as defined in Lemma \ref{lemma_stabilizing}. If the optimal solution of the following SDP problem:
 \begin{align}
       & \ \ \ \ \ \ \ \ \ \min_{\substack{\alpha_i, \beta_i, \gamma_i, F_i \\ \forall  \ i \in \mathcal{C}}}  \sum _{i \in \mathcal{C}} -a \alpha_i + b \beta_i + c \gamma_i, \label{eq_sdp_obj} \\
      & \text{subject to: }\notag \\ 
     & \begin{bmatrix}
        \Gamma_i + \Upsilon_i & P_i \bar{B}_i & P_i\bar{D}_{-i}  \\
        \bar{B}^\top_i P_i & R_i & 0 \\
        \bar{D}_{-i}^\top P_i & 0 & \bar{R}_{-i}
    \end{bmatrix}  \succeq \alpha_i I_{n+m} \  \ \forall  \ i \in \mathcal{C}, \label{eq_lemma1_c} \\
    & \begin{bmatrix}
        \beta_i I_{m_i} & R_i^{-\frac{1}{2}}B_i^\top P_i + R_i^{\frac{1}{2}} F_{i} C_{i} \\
        ( R_i^{-\frac{1}{2}}B_i^\top P_i + R_i^{\frac{1}{2}} F_{i} C_{i})^\top & \beta_i I_{n}
    \end{bmatrix} \succeq 0 \notag \\
    &  \  \ \ \ \ \ \ \ \ \ \ \ \ \ \ \ \ \ \ \ \ \ \ \ \ \ \ \ \ \ \ \ \ \ \ \ \ \ \ \ \ \ \ \ \ \ \ \ \ \ \ \ \forall  \ i \in \mathcal{C}, \label{eq_nashapprox}\\
     & \begin{bmatrix}
        \gamma_i I_{m_i} & R_i^{\frac{1}{2}} F_i D_i \\
        (R_i^{\frac{1}{2}} F_i D_i)^\top & \gamma_i I_{m_i}
    \end{bmatrix} \succeq 0 \  \ \forall  \ i \in \mathcal{C}, \label{eq_atten}
 \end{align}
 returns $\alpha_i>0$ for at least one $i \in \mathcal{C}$, then the resulting set of matrices $\mathcal{F}$ for the output feedback control $\vct{u}_i = F_i \vct{y}_i \  \forall i \in \mathcal{C}$ in \eqref{eq_mas_dyn1}-\eqref{eq_mas_dyn2} satisfies (O1), (O2), and (O3).
\end{lemma}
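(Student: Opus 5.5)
The argument splits into three parts, one for each of (O1), (O2) and (O3). Each part reduces to a result already stated: Lemma \ref{lemma_stabilizing}, Lemma \ref{lemma_f_fnash}, and Corollary \ref{cor_matrixmin_boyd}.

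\emph{Step 1: (O1) and (O2).} Take an optimal solution in which $\alpha_i>0$ for some index $i$. For that index, constraint \eqref{eq_lemma1_c} says the block matrix in \eqref{eq_synthesisF} is bounded below by $\alpha_i I \succ 0$, so it is positive definite. Then \eqref{eq_synthesisF} holds for at least one $i\in\mathcal{C}$, and Lemma \ref{lemma_stabilizing} applied to the optimal $F_1,\dots,F_{|\mathcal{C}|}$ gives (O1) and (O2) immediately. One small point should be noted: the identity in \eqref{eq_lemma1_c} must have the same dimension as the block matrix, namely $n+m_i+\sum_{j\neq i}m_j$. I read the subscript $n+m$ as this size.

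\emph{Step 2: $\beta_i$ and $\gamma_i$ bound $g_i$ and $h_i$.} By Corollary \ref{cor_matrixmin_boyd}, i.e. the Schur complement of the $\beta_i I$ block, constraint \eqref{eq_nashapprox} is equivalent to $g_i=\norm{R_i^{-\frac12}B_i^\top P_i+R_i^{\frac12}F_iC_i}_2\le\beta_i$. In the same way, \eqref{eq_atten} is equivalent to $h_i=\norm{R_i^{\frac12}F_iD_i}_2\le\gamma_i$. Both constraints are affine in $F_i$, $\beta_i$, $\gamma_i$, since $P_i$ is fixed data from the CAREs \eqref{eq_coupled_ricky}. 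Constraint \eqref{eq_lemma1_c} is also affine in $F_i$, because $\Gamma_i$, $\bar B_i$ and $\bar D_{-i}$ are linear in the $F_j$. $\Upsilon_i$ is also linear in $F_i$, since $R_i^{-1}$ and $P_iB_i$ are fixed, so $F_iD_iR_i^{-1}$ appears only linearly. The problem is therefore a genuine SDP.

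Because $b,c>0$ and the objective is minimized, at the optimum each $\beta_i$ and $\gamma_i$ equals its lower bound, $\beta_i=g_i$ and $\gamma_i=h_i$. If either were strictly larger, decreasing it alone would keep feasibility and lower the cost. Consequently, among all $\mathcal{F}$ satisfying \eqref{eq_lemma1_c}, the optimizer minimizes a positive combination of $\sum_i g_i$ and $\sum_i h_i$, traded off against the stability margin $\alpha_i$.

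\emph{Step 3: (O3).} Since $\mathcal{F}$ satisfies Lemma \ref{lemma_stabilizing}, Lemma \ref{lemma_f_fnash} applies and gives
$f^{\mathcal F}_i-f^{\mathrm{Nash}}_i\le k_1g_i^2+k_2h_i^2+k_3g_ih_i+\sum_{j\neq i}(a_jg_j+b_jh_j)$. The right-hand side has nonnegative constants and is nondecreasing in each $g_j$ and $h_j$, so it is bounded by the same expression with $g_j$ replaced by $\beta_j$ and $h_j$ by $\gamma_j$. Minimizing the $\beta_j$ and $\gamma_j$ therefore drives this upper bound on the gap toward its minimum. That is the sense in which (O3) asks to minimize the upper bound difference.

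\emph{Main obstacle.} The delicate step is the claim that minimizing the linear surrogate $\sum_i(b\beta_i+c\gamma_i)$ minimizes the bound, which is quadratic and coupled in $(g,h)$. I would argue this through monotonicity. The bound is coordinatewise increasing in $(g_1,h_1,\dots)$, so any decrease of all $\beta_j,\gamma_j$ decreases it. The SDP returns a Pareto-minimal point in the $(\beta,\gamma)$ orthant subject to the stability constraints, and the bound is minimized in that Pareto sense. Step 2 identifies the optimal $\beta_j,\gamma_j$ with the actual norms, and this is what makes the bound in Lemma \ref{lemma_f_fnash} explicitly controlled by the optimal values. I would also note that the constants $k_\cdot,a_j,b_j$ depend on $\int\norm{\vct x}$, which depends on $A_c$ and hence on $\mathcal F$. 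I would accept this dependence as stated in Lemma \ref{lemma_f_fnash} rather than reopen it.
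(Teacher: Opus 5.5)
Your proposal is correct and follows the paper's proof essentially step for step. The paper also obtains (O1)--(O2) by noting that $\alpha_i>0$ makes \eqref{eq_synthesisF} hold and invoking Lemma~\ref{lemma_stabilizing}; it then uses Corollary~\ref{cor_matrixmin_boyd} to replace $\beta_i,\gamma_i$ by $g_i,h_i$ and appeals to Lemma~\ref{lemma_f_fnash} for (O3). The paper's proof shares the same looseness at the last step, simply asserting that the linear objective ``minimizes the upper bound,'' so your monotonicity/Pareto reading of (O3) and your caveat that the constants in Lemma~\ref{lemma_f_fnash} actually depend on $\mathcal{F}$ make that gap explicit rather than depart from the paper's route.
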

\begin{proof}
    If $\alpha_i > 0$ for at least one $i \in \mathcal{C}$, then the set of matrices $\mathcal{F}$ satisfies the LMI in \eqref{eq_synthesisF} of Lemma \ref{lemma_stabilizing}.
    Using Corollary \ref{cor_matrixmin_boyd}, note that $\min\limits_{\substack{\beta_i, F_i \text{ for } i \in \mathcal{C}}} \sum_{i \in \mathcal{C}}  b\beta_i$ subject to \eqref{eq_nashapprox} is equivalent to $\min\limits_{\substack{\substack{\beta_i, F_i  \ \forall i \in \mathcal{C}}}} \sum_{i \in \mathcal{C}}  bg_i$, where $g_i$ is defined in Lemma \ref{lemma_f_fnash}. Likewise, $\min\limits_{\substack{\gamma_i, F_i \ \forall i \in \mathcal{C}}} \sum_{i \in \mathcal{C}}  c\gamma_i$ subject to \eqref{eq_atten} is equivalent to $\min\limits_{\substack{\substack{\gamma_i, F_i  \ \forall i \in \mathcal{C}}}} \sum_{i \in \mathcal{C}} ch_i$, where $h_i$ is defined in Lemma \ref{lemma_f_fnash}. Therefore, the SDP model in \eqref{eq_sdp_obj}-\eqref{eq_atten} is equivalent to
    \begin{align}
               & \min_{\substack{\alpha_i, \beta_i, \gamma_i, F_i \\ \forall  \ i \in \mathcal{C}}}  \sum _{i \in \mathcal{C}} -a \alpha_i +  bg_i + c h_i, \label{eq_sdp_obj2}
    \end{align}
    subject to \eqref{eq_lemma1_c}. By Lemma 2, note that the previous optimization problem minimizes the upper-bound of $f^\mathcal{F}_i - f^{\mathrm{Nash}}_i$ $\forall$ $i \in \mathcal{C}$. Therefore, the set of matrices $\mathcal{F}$ that solves the SDP model in \eqref{eq_sdp_obj}-\eqref{eq_atten} satisfies O1, O2, O3.
\end{proof}
\begin{remark}  
If the solution to the SDP model in \eqref{eq_sdp_obj}-\eqref{eq_atten} returns  $\alpha_i < 0$ $\forall \ i \in \mathcal{C}$, it does not necessarily imply that the resulting 
set of matrices $\mathcal{F}$ should be discarded. The reason is that the left-hand (LHS) side of \eqref{eq_lemma1_c} can be still positive definitive. Thus, we suggest checking post-solution if this is case. If the LHS of \eqref{eq_lemma1_c} is still positive definite, then the set of matrices $\mathcal{F}$ that results from solving the SDP model satisfies (O1), (O2), and (O3).
\end{remark}
\begin{remark}  
The constraint \eqref{eq_lemma1_c} implies that $\bar{R} - \alpha_i I_{m} \succeq 0$ $\forall \ i \in \mathcal{C}$, where $\bar{R} = diag(R_1, ..., R_{|\mathcal{C}|})$. Therefore, $ \min\limits_{\alpha_i \ \forall i \in \mathcal{C}}\sum_{i \in \mathcal{C}}  -a\alpha_i$, where $a \in \mathbb{R}_{>0}$, is bounded  below. By Corollary \ref{cor_matrixmin_boyd}, $\beta_i>0$ and $\gamma_i>0$ $\forall \ i \in \mathcal{C}$, then $\min\limits_{\beta_i, \gamma_i \ \forall i \in \mathcal{C}}\sum_{i \in \mathcal{C}}  b\beta_i + c\gamma_i$ are bounded below. Therefore the objective function  \eqref{eq_sdp_obj} is bounded below.
\end{remark}

\begin{remark}
\label{rk_impor_cons}
The absence of constraints \eqref{eq_nashapprox} and \eqref{eq_atten} in the SDP model \eqref{eq_sdp_obj}-\eqref{eq_atten} can result in a set of matrices $\mathcal{F}$ that yields an unbounded cost function $f^\mathcal{F}_i \ \forall i \in \mathcal{C}$. To demonstrate this, consider the SDP model omitting constraints \eqref{eq_nashapprox} and \eqref{eq_atten}:
\begin{align}
    & \ \ \ \ \ \ \ \ \ \ \ \ \ \ \ \ \ \ \ \ \ \ \ \  \min_{\substack{\alpha_i, F_i \\ \forall  \ i \in \mathcal{C}}}  \sum _{i \in \mathcal{C}} -a \alpha_i, \label{eq_sdp_obj_unbounded} \\
      & \text{subject to: }\notag \\ 
     & \begin{bmatrix}
        \Gamma_i + \Upsilon_i & P_i \bar{B}_i & P_i\bar{D}_{-i}  \\
        \bar{B}^\top_i P_i & R_i & 0 \\
        \bar{D}_{-i}^\top P_i & 0 & \bar{R}_{-i}
    \end{bmatrix}  \succeq \alpha_i I_{n+m} \  \ \forall  \ i \in \mathcal{C}. \label{eq_lemma1_c_unbounded} 
\end{align}
     Let the dynamics of a single agent system be given by \eqref{eq_dyn1_oneagent}-\eqref{eq_dyn2_oneagent}. By Schur complement and Woodbury matrix identity $(R_1-\alpha I_{m})^{-1} = R^{-1} + R^{-1}(R-\alpha I_m)^{-1}$ and considering $D=0$, \eqref{eq_lemma1_c} becomes:
        \begin{align} \label{eq_one_case_exam}
            -(A+BFC)^\top P  & - P(A+BFC) \succeq \notag \\
            & \alpha I_n + PBR^{-1}(R-\alpha I)^{-1}B^\top P.
        \end{align}
    For the scalar case when $A=B=C=Q=R=1$, as $R - \alpha \geq 0$ by \eqref{eq_lemma1_c_unbounded}, \eqref{eq_sdp_obj_unbounded} is minimized when $\alpha = R = 1$. Moreover, the solution to the ARE is $P = 2.41$. Replacing $A, B, C, R, P, \alpha$ in \eqref{eq_one_case_exam} with their respective values, \eqref{eq_one_case_exam} results in:
     \begin{align}
            -2(2.41)(1 + F) \geq 1. \label{eq_f_scalar_case}
        \end{align}
     Note that the value of $F$ can be infinitely negative to satisfy \eqref{eq_f_scalar_case}. Replacing $F$ with an infinitely negative value in \eqref{eq_functional_player} yields an unbounded cost function value. Therefore, constraints \eqref{eq_nashapprox} and \eqref{eq_atten} that result from Lemma \ref{lemma_f_fnash} enable to produce a bounded cost function  $f^\mathcal{F}_i \ \forall i \in \mathcal{C}$. 
     \end{remark}
     \begin{remark}
                Constraints
              \eqref{eq_nashapprox} and \eqref{eq_atten} do not only enable to produce bounded cost function $f^\mathcal{F}_i \ \forall i \in \mathcal{C}$ as we demonstrated in Remark \ref{rk_impor_cons}. Indeed, they serve to approximate $f^\mathcal{F}_i$ to $f^{\mathrm{Nash}}_i \ \forall i \in \mathcal{C}$. Moreover, note that the role of  constraint \eqref{eq_atten} is to attenuate the impact of the disturbance $\vct{d}_i \ \forall i \in \mathcal{C}$ on the system.
     \end{remark}

\section{Implementation and Case Studies}

\subsection{Introduction to an open-source toolbox}
\label{sec_toolbox} 
    We have created CMAS, a toolbox for the Control of Multi-agent Systems. CMAS is available in Python and in MATLAB. Both packages   
    implement the framework of this article and other novel frameworks we are continuously developing for the control of MAS \cite{model_free_paper}. 
    
    Despite the wide range of control packages offered in Python and MATLAB, to the best of our knowledge, there is no package for the control of MAS. CMAS aims to fill this gap by offering a comprehensive suite of functions and examples for the control of MAS integrating a game-theoretic perspective. CMAS is publicly available in the GitHub repository \cite{cmas_repo}. As follows, we provide a succinct overview of the essential functions in CMAS that are associated to this work: 
\begin{enumerate}
    \item \lstinline|mas|\_\lstinline|output|\_\lstinline|feedback|:
It solves the SDP model of Lemma \ref{lemma_sdp_problem}. In Python, we use CVXPY \cite{diamond2016cvxpy, agrawal2018rewriting}, whereas in MATLAB, we use YALMIP \cite{yalmip}. As input data, the function reads the matrices $A$, $B_i$, $C_i$, $D_i \ \forall i \in \mathcal{C}$ from \eqref{eq_mas_dyn1}-\eqref{eq_mas_dyn2}, $Q_i$, $R_i \ \forall i \in \mathcal{C}$ from \eqref{eq_functional_player}, the set of matrices $P_i \ \forall i \in  \mathcal{C}$ that solves the CAREs in \eqref{eq_coupled_ricky}, and the parameters $a, b$ and $c$ from \eqref{eq_sdp_obj}. The function returns the set of matrices $F_i \ \forall i \in  \mathcal{C}$.
    \item \lstinline|newton|\_\lstinline|raphson|\_\lstinline|cares|: It returns the set of matrices $P_i \ \forall i \in \mathcal{C}$ that solves the CAREs in \eqref{eq_coupled_ricky}. To this end, we propose solving a SDP model iteratively that incorporates linear matrix equations. These matrix equations are derived from applying the Newton-Raphson method to the CAREs in \eqref{eq_coupled_ricky}. These matrix equations are presented in algorithm $7$ in \cite{Engwerda2007}. As input data, the function reads the matrices $A$, $B_i$, $Q_i$, $R_i$, $\ \forall i \in \mathcal{C}$. When these matrices are highly ill-conditioned or badly scaled, the solver for the SDP model can experience numerical issues. As an alternative to this function, we present \lstinline|lyapunov|\_\lstinline|iteration|\_\lstinline|cares|.
    
    \item \lstinline|lyapunov|\_\lstinline|iteration|\_\lstinline|cares|: It returns the set of matrices $P_i \ \forall i \in \mathcal{C}$ that solves the CAREs \eqref{eq_coupled_ricky}. To this end, it implements the algorithm $5$ presented in \cite{Engwerda2007}. This algorithm solves standard AREs iteratively. We use functions in Python and MATLAB that balance and re-scale the input matrices to prevent numerical issues. However, a limitation of this algorithm is that it requires that each pair $(A, B_i) \ \forall i \in \mathcal{C}$ to be stabilizable \cite{li_lyapunoviter}. This limitation is not present in \lstinline|newton|\_\lstinline|raphson|\_\lstinline|cares|.
\end{enumerate}
It should be noticed that neither \lstinline|newton|\_\lstinline|raphson|\_\lstinline|cares| outperforms the \lstinline|lyapunov|\_\lstinline|iteration|\_\lstinline|cares| nor vice versa as each function has its own benefits and limitations. Moreover, note that 
first \lstinline|newton|\_\lstinline|raphson|\_\lstinline|cares| or \lstinline|lyapunov|\_\lstinline|iteration|\_\lstinline|cares| should be executed, then the resulting set of matrices $P_i \ \forall i \in \mathcal{C}$ should be inputted to \lstinline|mas|\_\lstinline|output|\_\lstinline|feedback|. In the repository, each function incorporates a detail description of its inputs and outputs. CMAS offers more functions, besides the three detailed earlier.

In Section \ref{sec_validation}, the proposed control framework is compared with other frameworks for MAS control. Through the comparison in numerical cases, we can validate and showcase the virtues of the proposed control framework. In Section \ref{sec_powergrid}, using the proposed framework we design a control scheme to regulate the electrical frequency in a power grid. Through this case study, we showcase the scalability of the control framework in a high-order and realistic networked system. All case studies are also available in the repository \cite{cmas_repo}. All numerical results shown in this article are generated using CMAS in MATLAB. These results are virtually the same as using CMAS in Python.

\subsection{Validation and comparison with other frameworks}
\label{sec_validation}
The framework in \cite{Rodrigues_LQR} proposes the synthesis of an output feedback control scheme for the LQR problem. This corresponds to a MAS with one agent. 
We compare our proposed framework against the framework in \cite{Rodrigues_LQR} for the case in \cite[Example 4.1]{Rodrigues_LQR}. Table \ref{table_comparison_works}  illustrates that the values of $F_1$ and $f^{\mathcal{F}}_1$ that result from using \lstinline|mas|\_\lstinline|output|\_\lstinline|feedback| are very close to those that result from using \cite{Rodrigues_LQR}, differing only in up to $-0.0014\%$. However, note that, in contrast to \lstinline|mas|\_\lstinline|output|\_\lstinline|feedback|, the framework in \cite{Rodrigues_LQR} does not incorporate the constrains \eqref{eq_nashapprox} and \eqref{eq_atten}. The absence of these constraints can yield unbounded cost values in some cases as it is illustrated in Remark \ref{rk_impor_cons}.

The work in \cite{capello_distributednash} proposes the synthesis of distributed controllers by solving sequential AREs for a MAS with heterogeneous agents. The mentioned work considers the following case study. Suppose a set of agents $\mathcal{C} = \{1,2,3,4\}$ that moves in the plane $\mathbb{R}^2$ with restrictive communication, where each agent has single-integrator dynamics as follows:
\begin{align}
    \dot{\vct{x}}_i & = B_{ii}\vct{u}_i + \sum_{j \in \mathcal{N}_i} B_{ij} \vct{u}_{j}, \label{eq_cap_case_1}\\
    \vct{y}_i & = \begin{bmatrix}
        \vct{x}^\top_{j_1} & ... & \vct{x}^\top_{j_{|\mathcal{N}_i|}}
    \end{bmatrix}^\top \label{eq_cap_case_2}, 
\end{align}
where $x_i \in \mathbb{R}^2$ represents the position of agent $i \in \mathcal{C}$ on the plane, $B_{ii} = I_{2}$, $B_{ij} = -I_2$, and $\mathcal{N}_1 =\{ 1 \}$, $\mathcal{N}_2 =\{1, 2\}$, $\mathcal{N}_3 =\{2, 3\}$, and $\mathcal{N}_3 =\{2, 4\}$. The objective is to design a control scheme that steers the position of the agents to satisfy a desired formation on the plane. To this end, first, we synthesize the set of matrices $F_i \ \forall i \in \mathcal{C}$ using the control framework proposed in \cite{capello_distributednash}. Second, we re-formulate \eqref{eq_cap_case_1}-\eqref{eq_cap_case_2} in the form \eqref{eq_dyn1_oneagent}-\eqref{eq_dyn2_oneagent} and synthesize $F_i \ \forall i \in \mathcal{C}$ using \lstinline|mas|\_\lstinline|output|\_\lstinline|feedback|. Table \ref{table_comparison_works} shows that \lstinline|mas|\_\lstinline|output|\_\lstinline|feedback| returns exactly the same set of matrices $F_i \ \forall i \in \mathcal{C}$, and consequently, the same cost function values $f_i^{\mathcal{F}} \ \forall i \in \mathcal{C}$ that the framework in \cite{capello_distributednash} returns. Hence, the control framework that we propose has been validated with the work in \cite{capello_distributednash}. However, in contrast to our proposed control framework, the framework in \cite{capello_distributednash} is restricted to MAS with dynamically uncoupled agents. The previous case is indeed an example of this type of MAS because the differential equation \eqref{eq_cap_case_1} for each agent $i$ does not have the state $\vct{x}_j$ of other agents $j \neq i $ on its right-hand side.

\begin{table}[]
\setlength{\tabcolsep}{0.1em}
\caption{Comparison of resulting set of matrices $\mathcal{F}$ and cost function values $f^{\mathcal{F}}_i$ for different case studies using the proposed control framework and other frameworks.}
\begin{tabular}{ccccccc}
\toprule
                       & \multicolumn{2}{c}{ \cite[Example 4.1]{Rodrigues_LQR}} & \multicolumn{2}{c}{\cite[Section VI]{capello_distributednash}} & \multicolumn{2}{c}{ \cite[Section V-A]{distributednash_roy}} \\ \midrule
                       & {[}26{]}                                & Proposed                                & {[}19{]}                                & Proposed                               & \cite{distributednash_roy}          & Proposed          \\ \midrule \midrule
 $F_1$     & $\begin{bmatrix}
    6.90 \\
    84.92
\end{bmatrix}^\top$                                         &   $\begin{bmatrix}
    6.89 \\
    84.95
\end{bmatrix}^\top$                                        &          $-0.1 I_2$                             &    $-0.1 I_2$                                      &    $\begin{bmatrix}
    -0.89 \\
    -1.32 \\
    -0.50 \\
    0.63 \\
    0.18
\end{bmatrix}^\top$           &       $\begin{bmatrix}
    -0.99 \\
    -1.64 \\
    0.63 \\
    0.60 \\
    0.13
\end{bmatrix}^\top$            \\ 
   $f^{\mathcal{F}}_1$ &     21.12                                    &   21.09                                      &        0.14                                 &  0.14                                      &      1.65         & 1.83                   \\ \midrule
 $F_2$               &      -                                   &      -                                   &        $\begin{bmatrix}
     -0.45 I_2 \\
     -0.08 I_2
 \end{bmatrix}^\top $                                 &  $\begin{bmatrix}
     -0.45 I_2 \\
     -0.08 I_2
 \end{bmatrix}^\top $                                   &  $\begin{bmatrix}
     -0.89 \\
     -1.32 \\
     0.50 \\
     0.63 \\
     0.18
 \end{bmatrix}^\top$                 &     $\begin{bmatrix}
     -0.98\\
     -1.69\\
     0.66\\
     0.62\\
     0.14
 \end{bmatrix}^\top$              \\
   $f^{\mathcal{F}}_2$ &     -                                    &       -                                  &     2.66                                    &    2.66                                    &   1.22            &        1.34           \\ \midrule
 $F_3$               &      -                                   &          -                               &    $\begin{bmatrix}
     -0.45 I_2 \\
     -0.22 I_2
 \end{bmatrix}^\top$                                     &   $\begin{bmatrix}
     -0.45 I_2 \\
     -0.22 I_2
 \end{bmatrix}^\top$                                 &   $\begin{bmatrix}
     0.50 \\
     0.61 \\
     0.50 \\
     0.61 \\
     -1.32 \\
     -1.70 \\
     0.16\\
     0.16
 \end{bmatrix}^\top$                &      $\begin{bmatrix}
     0.33 \\
     0.37 \\
     0.28 \\
     0.17 \\
     -0.96 \\
     -1.84 \\
     0.10\\
     0.10
 \end{bmatrix}^\top$              \\
   $f^{\mathcal{F}}_3$ &   -                                      &          -                               &             3.11                            &    3.11                                    &  1.81             &  1.74                 \\ \midrule
 $F_4$               &         -                                &      -                                   &      $\begin{bmatrix}
     -0.45 I_2 \\
     -0.22 I_2
 \end{bmatrix}^\top$                                     &     $\begin{bmatrix}
     -0.45 I_2 \\
     -0.22 I_2
 \end{bmatrix}^\top$                                     &    $\begin{bmatrix}
     0.63 \\
     0.30 \\
     0.61 \\
     0.28\\
     -1.34
 \end{bmatrix}^\top$           &    $\begin{bmatrix}
     0.61 \\
     0.25 \\
     0.64 \\
     0.26 \\
     -1.36
 \end{bmatrix}^\top$               \\
   $f^{\mathcal{F}}_4$ &    -                                     &    -                                     &          3.11                             &    3.11                                    &    0.64           &   0.61                \\ \midrule
 $F_5$               &         -                                & -                                        &         -                                &         -                               &         $\begin{bmatrix}
       0.63\\
       0.30\\
       0.61\\
       0.28\\
       -1.34
   \end{bmatrix}^\top$          &  $\begin{bmatrix}
       0.62 \\
       0.22 \\
       0.64 \\
       0.26 \\
       -1.36
   \end{bmatrix}^\top$                 \\
   $f^{\mathcal{F}}_5$ &              -                           &           -                              &             -                            &     -                                   &     0.47      &   0.46            \\ \bottomrule
\end{tabular}
\label{table_comparison_works}
\end{table}

Finally, the proposed control framework is compared with the framework in \cite{distributednash_roy}. As it is described in Section \ref{sec_introduction}, the mentioned work develops an iterative algorithm with LMIs to synthesize distributed control for MAS. The case study in\cite[Section V-A]{distributednash_roy} considers a similar formation control problem as \cite{capello_distributednash} does, but the MAS has five agents and double-integrator dynamics. The system states are the position $p_i \in \mathbb{R}$ for agents for $i=\{1, ..., 5\}$ and the velocity $v_i \in \mathbb{R}$ for agents $i=\{1, 2, 3\}$. Table \ref{table_comparison_works} shows that \lstinline|mas|\_\lstinline|output|\_\lstinline|feedback| returns cost function values that differ in up to $11\%$ with respect to the framework in  \cite{distributednash_roy} for this case study. This is probably because \cite{distributednash_roy} considers an alternative definition to Nash equilibrium. Moreover, the four top panels in Figure \ref{fig_our_vs_roy} illustrate that the 
settling time across all the states is approximately $5$ seconds using the proposed control, while it is $10$ seconds using the control in \cite{distributednash_roy}. The four top panels in Figure \ref{fig_our_vs_roy} also describe that the overshoot is on average up to $10\%$ greater using the control in \cite{distributednash_roy} than using the proposed control. These panels also display that the proposed control attains less oscillating response across all states. This is because the proposed control places the dominant eigenvalue on the real axis as the bottom panel of Figure \ref{fig_our_vs_roy} shows, thus providing more damping to the system than the control in \cite{distributednash_roy}.


\begin{figure}[h!]
      \centering
      \includegraphics[width=\linewidth]{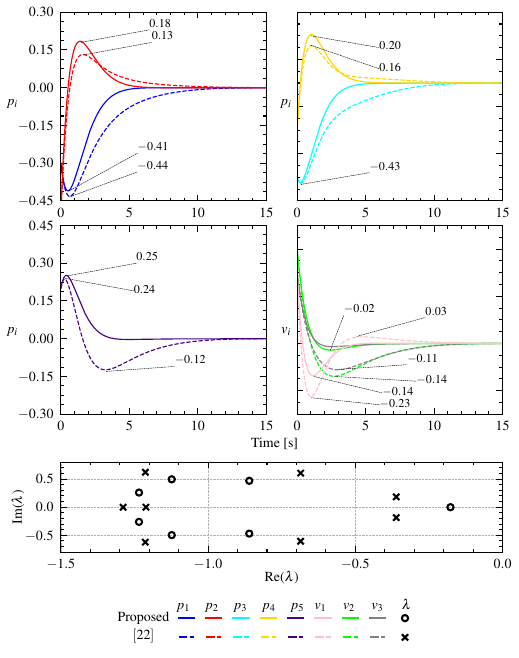} 
      \caption{Comparison of time-domain performance of the states $p_i \in \mathbb{R}$ (position) and $v_i \in \mathbb{R}$ (velocity) and eigenvalues $\lambda$ that result from using the proposed framework and the framework in \cite{distributednash_roy} for the case study in \cite[Section V-A]{distributednash_roy}. } 
      \label{fig_our_vs_roy}
\end{figure}

\subsection{Distributed frequency regulation in a power grid}
\label{sec_powergrid}
Using the proposed control framework, this case designs a distributed frequency regulation strategy for a modified version of the IEEE 24-bus power grid. As Figure \ref{fig_24_ieee_system} shows, this 230-kV system incorporates  11 grid-forming inverters (GFMIs) that have virtual inertia \cite{sohail_paul_gfmi}, 5 grid-following inverters (GFLIs) that have phase locked loop \cite{behrooz_gflis}, 34 transmission lines and 24 buses. First, we formulate a NLQG for the frequency regulation of the grid. To this end, GFMIs and GFLIs are considered selfish agents that seek to minimize their cost functions subject to the system dynamics. These system dynamics are represented by a multi-nodal frequency dynamics model, which includes the swing equation to represent GFMIs, power flow equations for the transmission lines, and equations for power dynamics of GFLIs. Refer to \cite{paulpatricia_nash_freq} for a detailed explanation on the construction of this NLQG. Next, Kron reduction is applied to eliminate the algebraic equations in the multi-nodal frequency dynamics model. Finally, the resulting state-space model has 40 states, including electrical frequency of GFMIs, voltage angles of buses connected to a GFMI, and states of the GFLIs, and 29 inputs, including loads and power references of GFMIs and GFLIs. Aside from the higher complexity of this model, this case study substantially differs from the case studies of Section \ref{sec_validation} in the presence of heterogeneous agents (GFMIs and GFLIs) and dynamical couplings (transmission lines) between the agents.  

\begin{figure}[t]
      \centering
      \includegraphics[width=\linewidth]{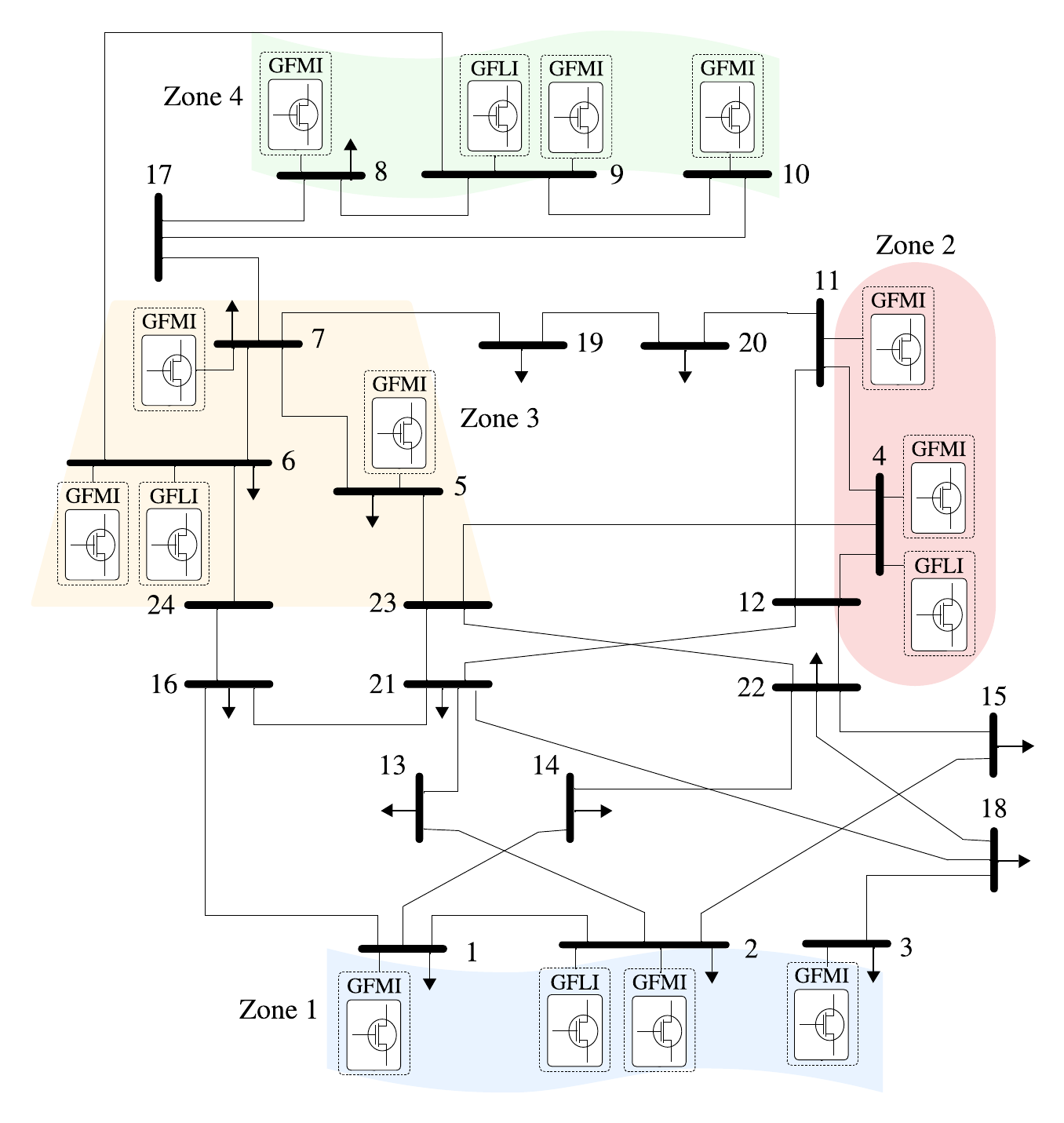} 
      \caption{Modified IEEE 24-bus power system. GFMIs and GFLIs can share state information only within their zone.}
      \label{fig_24_ieee_system}
\end{figure}

Next, matrices $A$ and $B_i$, $Q_i$ and $R_i \ \forall i \in \mathcal{C}$ are inputted into \lstinline|lyapunov|\_\lstinline|iteration|\_\lstinline|cares| to compute the Nash equilibrium solution. In a Mac Studio, Apple M2 Ultra, $128$ GB, \lstinline|lyapunov|\_\lstinline|iteration|\_\lstinline|cares| is set to $60$ iterations and takes approximately $10$ seconds to finish with an error of $10^{-4}$. The function \lstinline|newton|\_\lstinline|raphson|\_\lstinline|cares| can also be used instead of \lstinline|lyapunov|\_\lstinline|iteration|\_\lstinline|cares|. This function is set to $3$ iterations and takes $211$ seconds to finish with an error of $10^{-4}$.

The resulting matrices $P_i  \ \forall i \in \mathcal{C}$ are then inputted to \lstinline|mas|\_\lstinline|output|\_\lstinline|feedback|. As input data to this function, the matrix $C_i$ for agent $i \in \mathcal{C}$ is chosen such that agent $i$, GFMI or GFLI, has access to the state information, including electrical frequency and state of the power dynamics of GFLIs, only within its corresponding zone. Moreover, agents cannot access voltage angles of the buses in any circumstance. The execution of the function \lstinline|mas|\_\lstinline|output|\_\lstinline|feedback| takes $0.6$ seconds to complete.

The resulting set of static matrices $\mathcal{F}$ constitutes the distributed frequency regulation control scheme for the grid. As explained before, agents exclusively use some states of their neighbors, without needing the full system's state. We implement and run this control scheme in a high-fidelity model of the  grid that we developed in Simulink. This high-fidelity model is nonlinear and has 261 states that describe the EMT dynamics of the GFMIs, GFLIs and the network. Hence, this model intends to provide a realistic representation of the dynamics of voltage, frequency, and control schemes that integrate the components of the grid. Refer to STING \cite{sting_repo} for a more detail explanation of the control schemes of the GFMIs and GFLIs in this high-fidelity model.

\begin{figure}[h!]
      \centering
      \includegraphics[width=\linewidth]{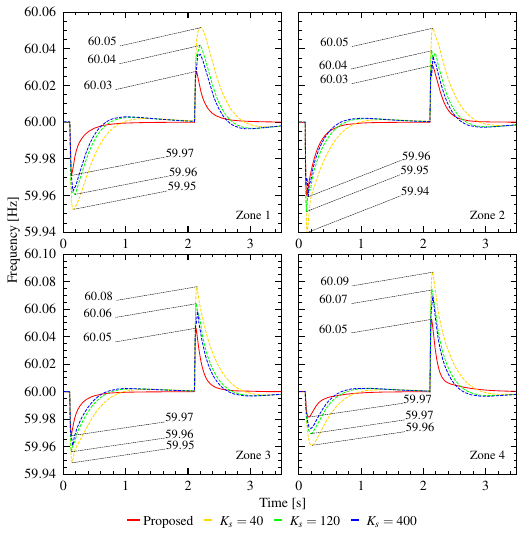} 
      \caption{Comparison of time-domain performance of  frequency (Hz) regulation that results from using the proposed control framework and three decentralized integral action strategies for the grid in Figure \ref{fig_24_ieee_system}. In each panel, the frequency for each control strategy is the frequency center of inertia (COI) of the zone. } 
      \label{fig_freq_grid}
\end{figure}

To contrast the proposed control framework, we implement three different frequency regulation strategies in the grid. These strategies consist of an integral action with gain $K_s \in \mathbb{R}$ that is incorporated in the active power control loop of a GFMI in each zone. In this manner, these strategies intend to model a decentralized secondary frequency regulation in the grid. Figure \ref{fig_freq_grid} shows that the frequency deviation is on average up to $41\%$ less when using the proposed control as opposed to using the decentralized strategies. Figure \ref{fig_freq_grid} also displays that the proposed control is able to set the frequency to its nominal value in approximately $1$ second, which is two times faster than the other strategies. Aside from this performance comparison, note that the proposed control enables the participation of  GFLIs and GFMIs from a game-theoretic perspective in a frequency regulation setting. 


\section{Conclusion}
We introduce a novel SDP model that synthesizes a set of static matrices, which are used as output feedback gains for agents in MAS. The LMI constraints in the SDP model have been derived such that the resulting control enables the stabilization and dissipativity of the MAS. We theoretically demonstrate that we can use the Nash equilibrium solution associated to state-feedback strategies to construct output feedback strategies. The synthesized control ensures the stability and dissipativity of the closed-loop system. Aside from theoretical contributions, we also showcase practical contributions. Compare to prior work which usually validates in small-scale numerical linear models, we experimentally validate our framework in a high-fidelity nonlinear power system model of 261 states. This showcase that our proposed SDP problem can be solved for large-scale problems using off-the-shelf solvers. This is a virtue compared to prior frameworks which their scalability remains unexplored. Moreover, we implement our theoretical framework  as an open-source toolbox in Python and MATLAB.

Although the computation of the solution for the SDP problem is centralized, the implementation in realistic MAS illustrates that the control scheme is distributed, as each agent's control strategy requires partial and local state information. As a future work, it may be appealing to investigate the theoretical connection between the existence of CAREs solution and the SDP feasibility in
Lemma~\ref{lemma_sdp_problem}. Additionally, future work may investigate the theoretical sufficient and necessary conditions for the existence of set of matrices $\mathcal{F}$ before the proposed SDP problem is solved. However, this issue is indeed posing a more general question: whether the static output feedback control for MAS exist. Moreover, the SDP is model-based as it requires to know the state-space model of the MAS. Currently, we are working to re-formulate the SDP model in a data-driven fashion.



\bibliography{bibliography.bib}

\begin{IEEEbiography}[{\includegraphics[width=1in,height=1.25in,clip,keepaspectratio]{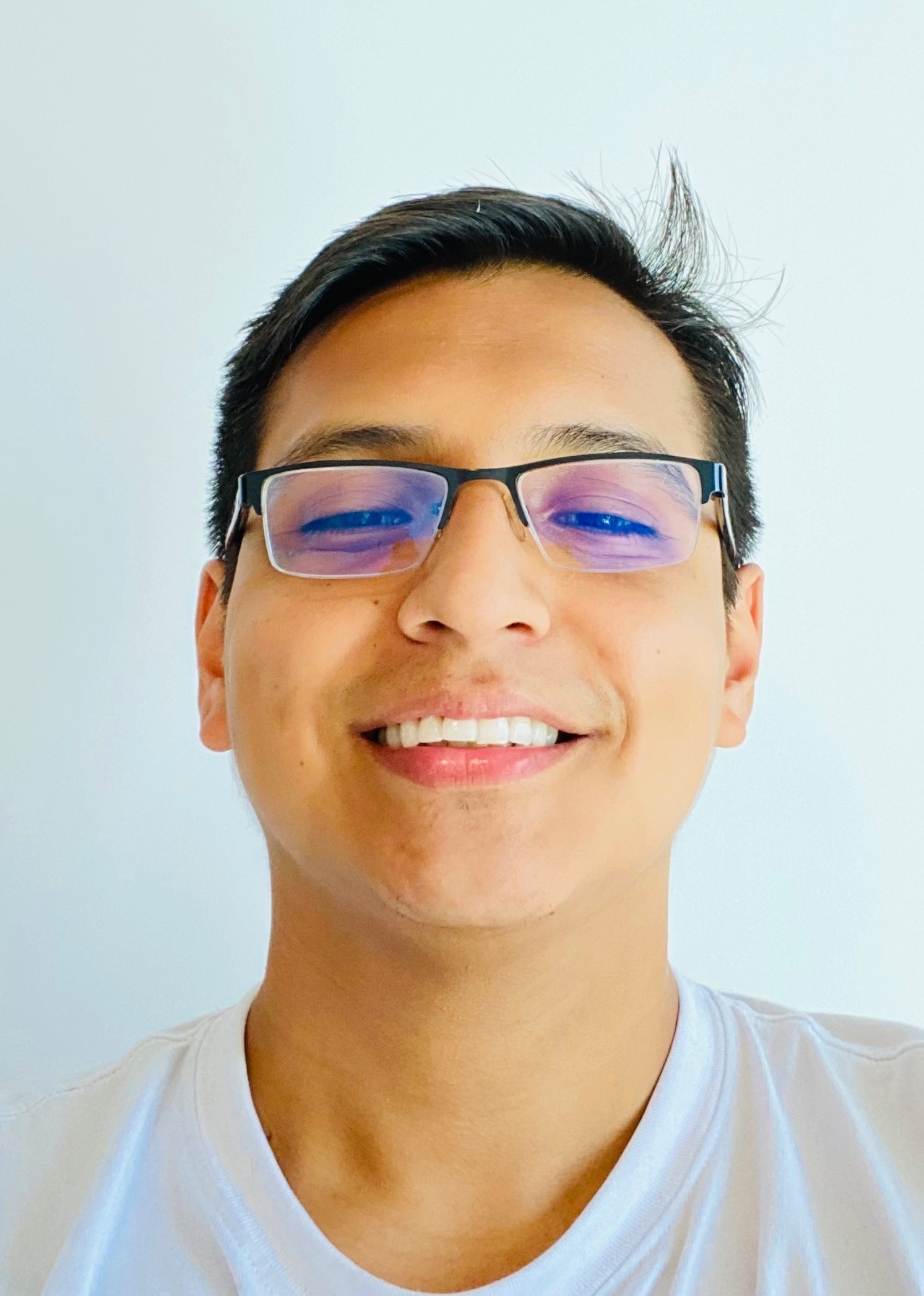}}]
{Paul Serna-Torre}
(Member, IEEE) received the B.Sc. degree in electrical engineering from Universidad Nacional de Ingenieria, Peru, where he received the Eduardo de Habich award for achieving the highest academic performance in the Electrical Engineering program. He received the M.A. degree in Applied Mathematics from University of California San Diego, USA. Paul has five years of experience in industry, working on contingency analysis, electricity markets and the coordination for the maintenance of substations and generators in the Independent System Operator (COES) and in the Energy Commission (Osinergmin) in Peru. Moreover, he worked on the design of power transformers in Delcrosa (electromechanics company) in Peru. He is currently working towards the Ph.D. degree at University of California San Diego, USA, with the supervision of professor Patricia Hidalgo-Gonzalez. He has been an intern at Control Group at Pacific Northwest National Laboratory (PNNL). Paul has been awarded   the Bicentenary Generation Scholarship by the Ministry of Education of Peru, and the Opportunity Funds scholarship by EducationUSA. Paul's research work focuses on the intersection of dynamical systems, game theory and learning techniques for the modeling of electromagnetic transient dynamics and the capacity expansion of power systems.
\end{IEEEbiography}
\begin{IEEEbiography}[{\includegraphics[width=1in,height=1.25in,clip,keepaspectratio]{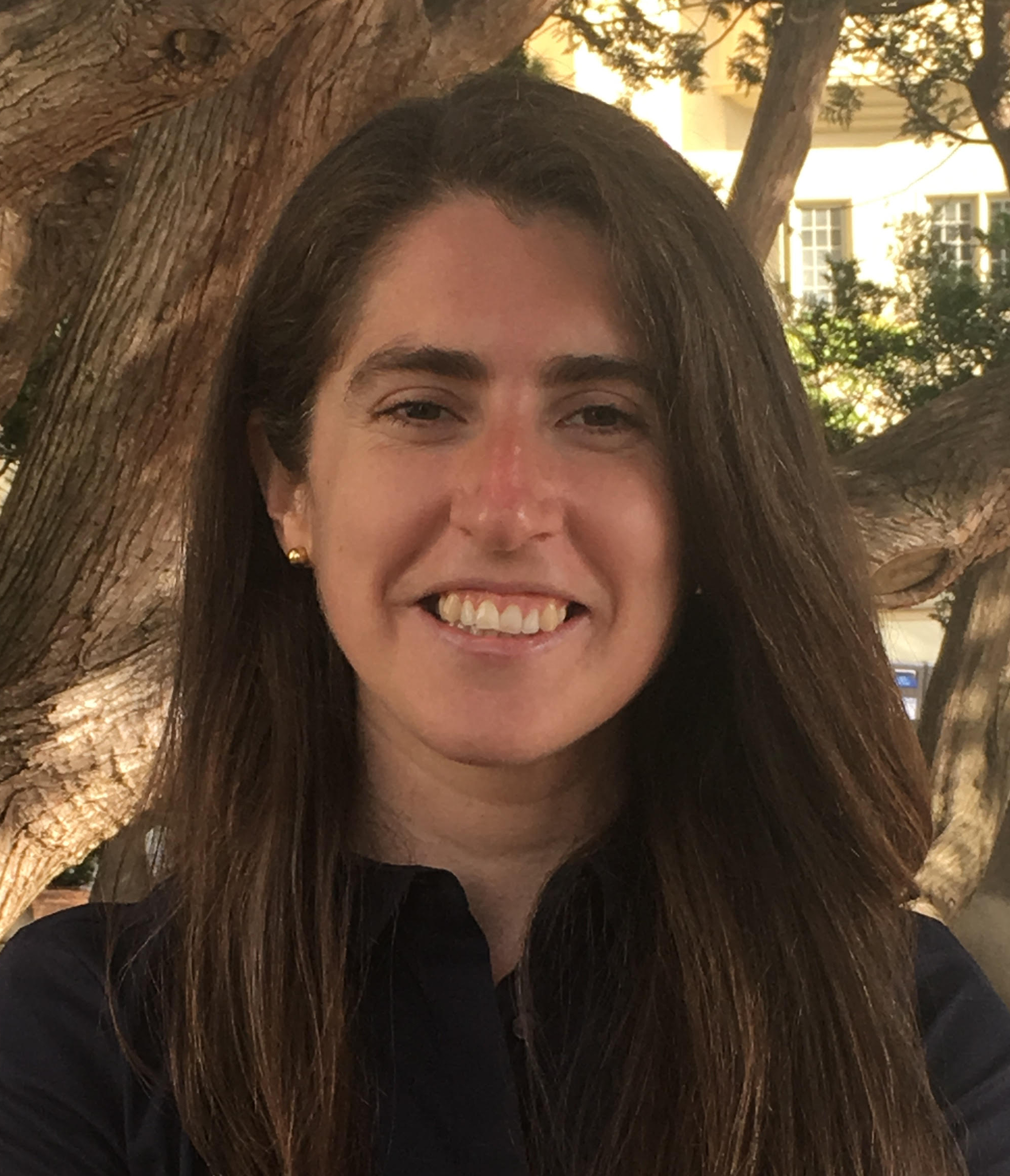}}]{Patricia Hidalgo-Gonzalez}
(Member, IEEE) received the B.Sc. degree in industrial and electrical engineering from Pontificia Universidad Catolica of Chile, and two M.Sc. degrees from the University of California, Berkeley in Energy and Resources and Electrical Engineering and Computer Sciences. She is currently an assistant professor at University of California San Diego. Her research work focuses high penetration of renewable energy using optimization, control theory and machine learning. Prof. Hidalgo-Gonzalez is the 2025 Institute of Industrial and Systems Engineers Energy Systems Division Outstanding Young Investigator Awardee, an NSF GRFP fellow, Siebel Scholar in Energy, Rising Star in Electrical Engineering and Computer Science, received the Outstanding Graduate Student Instructor Award at UC Berkeley (for teaching Convex Optimization), has been awarded Best paper at the Power Systems Computation Conference 2020, and is part of the Editors’ Highlights in Nature Communications (best 50 papers recently published in “Engineering and Infrastructure”), her work was cited in The White House 2024 Economic Report of the President, among other recognitions. She is part of the IEEE Task forces ''Data-driven controls for distributed systems'' and ''Wildfire Aware-Risk Management (WARM) for the Electric Power Grid'' and has served as best paper judge in the IEEE Control Systems Society Technical Committee on Energy Systems. She has also served as the opening talk for the energy storage session in EPRI's 43rd Seminar on Resource Planning in Washington D.C., and as a speaker at a symposium from the U.S. National Academy of Sciences and the German National Academy of Sciences Leopoldina in Halle, Germany, among others.
\end{IEEEbiography}

\end{document}